\newtheorem{thm}{Theorem}[section]
\newtheorem*{thm*}{Theorem}
\newtheorem*{conj*}{Conjecture}
\newtheorem{lem}[thm]{Lemma}
\newtheorem{prop}[thm]{Proposition}
\theoremstyle{remark}
\newtheorem{rem}[thm]{Remark}
\theoremstyle{definition}
\newcounter{claim}[thm]
\newcommand{\sym}[1]{\mathrm{Sym}(#1)}
\newcommand{\dih}[1]{\mathrm{Dih}(#1)}
\newcommand{\grp}[1]{\langle #1 \rangle}				
\newcommand{\op}[2]{\mathrm{O}_{#1}(#2)}	
\newcommand{\norm}[2]{\text{N}_{#1}(#2)}				
\newcommand{\cent}[2]{\text{C}_{#1}(#2)}				
\newcommand{\vst}[2]{G_{#1}^{[ #2]}}		
\newcommand{\vstx}[1]{G_x^{[ #1 ]}}			
\newcommand{\vsty}[1]{G_y^{[ #1 ]}}			
\newcommand{\syl}[2]{\mathrm{Syl}_{#1}( #2)}	
\newcommand{\zent}[1]{\mathrm{Z}(#1)}
\newcommand{\ba}[1]{\overline{#1}}
\title{A class of semiprimitive groups that are graph-restrictive\footnote{ T\lowercase{his research forms part of an  \uppercase{A}ustralian \uppercase{R}esearch \uppercase{C}ouncil \uppercase{D}iscovery \uppercase{P}roject (project number \uppercase{DP}120100446).}} }
\author{Michael Giudici}
\author{Luke Morgan}
\address{
School of Mathematics and Statistics (M019)\\
University of Western Australia\\
Crawley, 6009\\
Australia} 
\email{michael.giudici@uwa.edu.au}	\email{luke.morgan@uwa.edu.au}
\begin{document}

\begin{abstract}
We prove that an infinite family of semiprimitive groups are graph-restrictive. This adds to the  evidence for the validity of the PSV Conjecture and increases the minimal imprimitive degree for which this conjecture is open to 12. 
 Our result can be seen as a generalisation of the well-known theorem of Tutte on cubic graphs. The proof uses the amalgam method, adapted to this new situation. 
\end{abstract}

\maketitle

\section{Introduction}
A famous theorem of Tutte says that  in every group that acts faithfully and arc-transitively on a finite connected cubic graph, the order of a vertex stabiliser divides 48. We shall prove a generalisation of Tutte's theorem which holds for infinite families of graphs with valencies powers of three. 
To state our result, we must introduct some terminology. For a vertex $x$ of a graph $\Gamma$ we write $\Gamma(x)$ for the set of neighbours of $x$ and $G_x^{\Gamma(x)}$ for the permutation group induced by the stabiliser $G_x$ of $x$ on $\Gamma(x)$. 
Let $L$ be a permutation group and let $\Gamma$ be a connected graph with vertex-transitive group of automorphisms $G$ such that $|G_x|$ is finite for all vertices $x$.  If for  $x\in \Gamma$ there is a permutation isomorphism $G_x^{\Gamma(x)} \cong L$  we say that the pair $(\Gamma,G)$ is \emph{locally} $L$. Following  \cite{verret},  if there is a constant $C$ such that for every locally $L$ pair $(\Gamma,G)$ we have $|G_x| \leqslant  C$, we say that $L$ is \emph{graph-restrictive}. Our generalisation is the following.

\begin{thm}
\label{mainthm}
Let  $L$ be the Frobenius group of order $2\cdot3^n$ and degree $3^n$ with elementary abelian Frobenius kernel. Then $L$ is graph-restrictive.
\end{thm}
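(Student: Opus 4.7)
The strategy is the amalgam method. Suppose for contradiction that $L$ is not graph-restrictive, so there is a sequence of locally $L$ pairs $(\Gamma_i,G_i)$ with $|(G_i)_{x_i}|$ unbounded. Fix such a pair with $|G_x|$ large enough for the chain and commutator arguments below to produce nontrivial objects. The basic arithmetic data are $|G_x:G_{xy}|=3^n$, $|G_{xy}:G_x^{[1]}|=2$ (the point stabiliser in $L$ being the Frobenius complement), and $G_x^{[1]}\trianglelefteq G_x$; the goal is to bound $|G_x^{[1]}|$ purely in terms of $n$.

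The first substantive step is to show that $G_x^{[1]}$ is a $3$-group. Since the Frobenius kernel $K$ of $L$ is the unique minimal normal subgroup and acts transitively, $L$ is quasiprimitive, so van Bon's generalisation of the Thompson--Wielandt theorem gives that $G_{xy}^{[1]}=G_x^{[1]}\cap G_y^{[1]}$ is a $p$-group for some prime $p$. The involutive stabiliser in $L$ acts fixed-point-freely on $\Gamma(x)\setminus\{y\}$, so $G_{xy}^{[1]}$ has index $2$ in each of $G_x^{[1]}$ and $G_y^{[1]}$; combined with the standard argument that $p$ must divide the order of the local normal $3$-subgroup $K$, this yields $p=3$ and hence $G_x^{[1]}$ itself is a $3$-group.

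With this reduction in place, the amalgam method proper can be run. Set $Z_x=\Omega_1(Z(G_x^{[1]}))$, an $\mathbb{F}_3[G_x]$-module on which $G_x$ acts through $L$; by normality the image $\overline{Z}_x$ of $Z_x$ in $G_x^{\Gamma(x)}\cong L$ is contained in the Frobenius kernel $K$. Choose a geodesic $x=x_0,x_1,\ldots,x_b$ and let $b$ be the smallest integer such that $[Z_{x_0},Z_{x_b}]\neq 1$. Standard commutator identities of Thompson--Sims type control the successive sections $G_{x_i}^{[1]}/G_{x_{i+1}}^{[1]}$ as $\mathbb{F}_3[G_{x_i}]$-modules; together with the explicit action of $L$ on $K$ this should force $b$ to be small, and thus bound the length of the descending chain $G_x^{[i]}$ and with it $|G_x^{[1]}|$.

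The main obstacle, and the real divergence from Tutte's theorem (the case $n=1$), is the imprimitivity of $L$ once $n\ge 2$. In Tutte's situation $K\cong C_3$ is simple as an $L$-module, so essentially no case analysis is required at the level of $\overline{Z}_x$. Here, however, the involution in $L$ inverts $K\cong\mathbb{F}_3^n$, so \emph{every} subspace of $K$ is $L$-invariant; the image $\overline{Z}_x$ can therefore be any nontrivial subgroup of $K$, and each possibility gives rise to a distinct configuration in the amalgam. Producing a uniform bound on $b$ (and hence on $|G_x|$) across all these shapes, while making essential use of the fact that the quotient $L/K$ has order $2$ and inverts every section of $K$, is the technical heart of the argument and the step where the amalgam method has to be genuinely adapted beyond its classical form.
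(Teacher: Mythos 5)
Your reduction of $G_{xy}^{[1]}$ to a $p$-group identifies the wrong prime, and the error propagates through your whole setup. For $n\geqslant 2$ the group $L\cong 3^n{:}2$ is \emph{not} quasiprimitive: the involution inverts the kernel $K\cong\mathbb F_3^n$, so every subspace of $K$ is normal in $L$, and the proper nonzero subspaces are intransitive (semiregular) normal subgroups --- you say as much yourself in your final paragraph, contradicting the claim that $K$ is the unique minimal normal subgroup. So a quasiprimitive Thompson--Wielandt theorem does not apply; one needs Spiga's semiprimitive version or a direct argument. More importantly, the prime that comes out is $p=2$, not $p=3$: the point stabilisers of $L$ are its Sylow $2$-subgroups, so $|G_x\cap G_y:G_x^{[1]}|=2$ and for odd $r$ a Sylow $r$-subgroup $S$ of $G_x^{[1]}$ already lies in $G_{xy}^{[1]}$; the Frattini argument then makes $\norm{G_x}{S}$ transitive on $\Gamma(x)$ and $\norm{G_e}{S}\not\leqslant G_x\cap G_y$, forcing $S=1$. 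Thus $G_e$ is a $2$-group (Lemma~\ref{lem:isom of ge/gxy1 and fact of gxy}), $Z_x=\Omega_1(\zent{G_x^{[1]}})$ is an elementary abelian $2$-group, and the relevant representation theory is that of $L$ over $\mathbb F_2$ (where the nontrivial irreducibles are $2$-dimensional), not a submodule of the Frobenius kernel over $\mathbb F_3$. Your picture of a ``local normal $3$-subgroup'' containing $\overline{Z}_x$ has no counterpart in the actual configuration, so the commutator machinery you propose to run is aimed at the wrong object.

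Beyond this, the proposal stops exactly where the proof has to begin. The entire content of the theorem is the bound $b\leqslant 2$ on the critical distance (Theorem~\ref{b <3}), which is obtained by exploiting the block systems of $L$ (translates of subspaces): one generates $G_x$ from the edge stabilisers indexed by a basis of $V$ (Proposition~\ref{genofx}), sorts the neighbours of $x$ into critical and non-critical pairs, extracts a basis consisting of critical directions, and manufactures a nontrivial subgroup of the form $\cent{Z_\alpha}{G_\alpha}\cap\cent{Z_x}{G_x}$ normalised by two adjacent vertex stabilisers, contradicting connectivity via Lemma~\ref{lem:fundlem}. One then still needs the separate argument of Lemma~\ref{lem:boundonbisgood} to convert a bound on $b$ into $G_x^{[b+2]}=1$ and hence into a bound on $|G_x|$. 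Your text says the commutator identities ``should force $b$ to be small'' and explicitly names the uniform bound as the unresolved technical heart of the argument, so none of the substantive steps of the proof is actually present.
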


In the language introduced above, Tutte's seminal work on cubic arc-transitive graphs \cite{Tutte1,Tutte2}  asserts that $C_3$ and $\sym{3}$ are graph-restrictive (as subgroups of $\sym{3}$). In the same vein, Gardiner \cite{gard} showed that any transitive subgroup of $\sym{4}$ other than $\mathrm{Dih}(8)$ is graph-restrictive, while Sami \cite{sami} showed that any dihedral group of odd degree is as well. We extend our  terminology to describe these situations. Let $\mathcal P$ be a property of permutation groups (such as transitive) and let $\Gamma$ be a connected graph with vertex-transitive group of automorphisms $G$. We say that the pair $(\Gamma,G)$ is locally $\mathcal P$ if the permutation group $G_x^{\Gamma(x)}$ satisfies $\mathcal P$.

Motivated by the above results, particular attention has been paid to studying the structure of vertex stabilisers in locally primitive graphs. A transitive permutation group on a set $\Omega$ is called \emph{primitive} if it preserves no nontrivial partitions of $\Omega$.  A difficult and long-standing conjecture of Weiss  \cite{weisscon} asserts that  primitive permutation groups are graph-restrictive.
It follows from work of Trofimov and Weiss (see \cite[Theorem 1.4]{trofweiss1}) that the Weiss Conjecture is true for the subclass of 2-transitive groups. This required a deep theorem of Trofimov (see \cite{trof1}, \cite{trof2}, \cite{trof3,trof4} and \cite{trof5,trof6,trof7,trof8}) dealing with the case of permutation groups with socle $\mathrm{PSL}_n(q)$ in a doubly transitive action. In joint work Trofimov and Weiss \cite{trofweiss1,trofweiss2} have shown that the Weiss Conjecture holds for permutation groups with socle $\mathrm{PSL}_n(q)$ acting on subspaces.
Praeger, Spiga and Verret \cite{PrSV} reduced the truth of the Weiss Conjecture to a question about simple groups and Praeger, Pyber, Spiga and Szab\'o \cite{PPSS} subsequently showed that it is true for locally primitive pairs $(\Gamma,G)$ where the composition factors of $G$ have bounded rank.

A natural question is the following: what is the correct context  for graph-restrictive permutation groups and the Weiss Conjecture?
A permutation group is called \emph{quasiprimitive} if every nontrivial normal subgroup is transitive.  This class of groups includes all primitive groups and Praeger generalised the Weiss conjecture to quasiprimitive permutation groups \cite{praeger}.  Poto\v{c}nik, Spiga and Verret \cite{PSV} have shown that if a transitive permutation group  $L$ is graph-restrictive then it is \emph{semiprimitive}, that is, every normal subgroup of $L$ is either transitive or semiregular. This led the authors to make the PSV Conjecture:  a transitive permutation group is graph-restrictive if and only if it is semiprimitive. In later work of the second and third authors it  was shown  that intransitive permutation groups are graph-restrictive  if and only if they are semiregular \cite{pabloverret}.

The class of semiprimitive groups includes the classes of primitive, quasiprimitive and Frobenius groups. Recently this class was studied by Bereczky and Mar\'oti \cite{BM} due to a connection with collapsing monoids. We note that our definition follows \cite{PSV} (as opposed to \cite{BM}) by including all regular groups as semiprimitive groups.  There is an easy argument to show that regular groups are graph-restrictive. As part of their investigations, Poto\v{c}nik, Spiga and Verret showed that the action of $\mathrm{GL}_{2}(p)$ on the set of nontrivial vectors of a 2-dimensional vector space over $\mathbb F_p$ are graph-restrictive. They also showed that all semiprimitive groups of degree at most 8 are graph-restrictive.  One group whose status was left open in \cite{PSV} is the Frobenius group $3^2{:}2$ acting on 9 points (which fails to be quasiprimitive).  Investigating this group is a natural extension of the work of Tutte on the group $\sym{3}=3{:}2$ and  was seen as a natural test case for how methods used for primitive groups could be extended to semiprimitive groups. Our theorem therefore deals with the least degree imprimitive open case listed in \cite{PSV} and pushes the degree of the smallest imprimitive unknown case to 12 (although there is still one primitive group of degree 9 and one of degree 10 listed in \cite{PSV} for which the conjecture is unknown).

One important tool when studying the structure of vertex-stabilisers in vertex-transitive graphs is the so-called Thompson-Wielandt Theorem. First  repurposed for the locally primitive situation by Gardiner \cite[(2.3)]{gard}, a  generalisation  due to Spiga \cite{spigath-w}  shows that in a locally semiprimitive pair $(\Gamma,G)$  there exists a prime $p$ such that $G_{uv}^{[1]}$ is a $p$-group for each arc  $(u,v)$ of $\Gamma$. Another important tool has been the amalgam method which goes back to Goldschmidt \cite{gold}. Since this there have been many successful applications, notably in \cite{stelldelgado} where the method was refined. In contrast to our setting, for most of the cases considered in \cite{stelldelgado} the  action is locally 2-transitive,
 therefore  rather stronger than locally semiprimitive. The proof of our theorem relies upon extending the amalgam method to the semiprimitive group $3^n{:}2$. What enables us to make progress in this new situation is a consideration of block systems for the local action. This, together with a modification of an an argument of Weiss \cite{weissgold}, presented in Lemma~\ref{lem:boundonbisgood}, shows that a bound on the so-called critical distance in the amalgam method will show that the group $L$ is graph-restrictive. For an introduction to the amalgam method we refer the reader to \cite[10.3]{kurzweilstellmacher}.\\ \\

\textbf{Acknowledgements:} The authors thank Gabriel Verret and Pablo Spiga for reading an early version of the paper and providing useful comments.

\section{Preliminaries}
Let $n\in \mathbb N$ let $L\cong 3^n{:}\mathrm C_2$ be as in Theorem~\ref{mainthm}. Assume that $n\geqslant 2$ so that  $L$ acts faithfully and imprimitively on the $n$-dimensional vector space $V$ over $\mathbb F_3$. All proper normal subgroups of $L$ are elementary abelian of order $3^m$ for some $m$ and so are semiregular. Thus $L$ is semiprimitive. Blocks of imprimitivity for $L$ on $V$ arise from translates of subspaces. 
%
Note that the point stabilisers are the Sylow 2-subgroups of $L$ and the stabiliser of a block of size three is isomorphic to $\sym{3}$.

\begin{prop}
\label{genofx}
For  each basis  $\mathcal B$ of $V$ we have $L=\grp{L_0, L_w \mid w \in \mathcal B}$.
\end{prop}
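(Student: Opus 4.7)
The plan is to identify $L$ concretely as a group of affine maps of $V$ and show that the subgroup $H=\langle L_0, L_w \mid w\in\mathcal B\rangle$ already contains enough translations to recover the Frobenius kernel. Since $L$ is Frobenius with elementary abelian kernel of order $3^n$ acting regularly of degree $3^n$, we may realise the kernel $T$ as the group of translations $t_a\colon v\mapsto v+a$ (for $a\in V$) and the complement $L_0$ as $\langle\iota\rangle$, where $\iota\colon v\mapsto -v$ is the unique nontrivial element fixing $0$. Then $L=T\rtimes L_0$.

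First I would identify the stabiliser $L_w$ for an arbitrary $w\in V$. Conjugating $\iota$ by $t_w$ gives the involution $\iota_w := t_w\iota t_w^{-1}\colon v\mapsto 2w-v$, which fixes $w$ and generates $L_w$. The main computation is then
\[
\iota\cdot \iota_w \;=\; \iota\,t_w\,\iota\,t_w^{-1} \;=\; t_{-w}\,t_w^{-1} \;=\; t_{-2w} \;=\; t_w,
\]
where the last equality uses that $V$ has exponent $3$, so $-2\equiv 1\pmod 3$. Thus $H$ contains $t_w$ for every $w\in\mathcal B$.

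Next, since $\mathcal B$ is an $\mathbb F_3$-basis of $V$, the set $\{w:w\in\mathcal B\}$ generates $V$ as an abelian group, and hence the translations $\{t_w:w\in\mathcal B\}$ generate $T$. Therefore $H\supseteq T$, and since $H\supseteq L_0=\langle\iota\rangle$, we conclude $H\supseteq T\langle\iota\rangle=L$, as required.

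There is no real obstacle: once the affine realisation of $L$ is fixed, the proposition reduces to the one-line identity $\iota\,\iota_w=t_w$ in characteristic $3$ together with the fact that a basis spans. If anything, the only point worth noting explicitly is the use of $\mathrm{char}(V)=3$, which makes $t_{-2w}$ collapse to $t_w$; in arbitrary characteristic one would still obtain $t_{2w}$ and could then recover $t_w$ only after inverting $2$, which is available here for free.
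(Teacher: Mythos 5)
Your proof is correct and follows essentially the same route as the paper: both arguments come down to showing that $\grp{L_0,L_w}$ contains the translation $t_w$ for each $w\in\mathcal B$ (the paper via identifying $\grp{L_0,L_w}$ with the $\sym{3}$ stabilising the block $\{0,w,-w\}$, you via the explicit identity $\iota\,\iota_w=t_{-2w}=t_w$), after which the basis property yields the whole Frobenius kernel.
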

\begin{proof}
For $w\in \mathcal B$ we note that $\grp{L_0,L_w}=L_{\grp{w}}$, the stabiliser of the block $\{0,w,-w\}$, and so is isomorphic to $\sym{3}$. The translation by $w$ is thus contained in $L_{\grp{w}}$, and therefore 
$$\grp{L_0, L_w \mid w \in \mathcal B}$$
contains the Sylow 3-subgroup of $L$ and we are done.
\end{proof}

Now we assemble some facts concerning $\mathbb F_2$-modules for $L$.

\begin{prop}\label{p:Xmodules}
There is a bijection between the sets of irreducible $\mathbb F_2$-modules for $L$ and for $\op{3}{L}$. Moreover, the nontrivial irreducible $\mathbb F_2$-modules for $L$ have dimension two and if $t\in \op{3}{L}$ and $V$ is a non-trivial irreducible $\mathbb F_2L$-module, either $[t,V]=1$ or $t$ is fixed point free on $V$.
\end{prop}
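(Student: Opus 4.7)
The plan is to combine Clifford theory for $K:=\op{3}{L}$ with an explicit description of the irreducible $\mathbb F_2 K$-modules. Since $K$ is elementary abelian of order coprime to $2$, $\mathbb F_2 K$ is semisimple by Maschke, and every absolutely irreducible representation of $K$ in characteristic $2$ is a $1$-dimensional character $\chi\colon K\to \mathbb F_4^\times$ (the unique subgroup of $\bar{\mathbb F_2}^\times$ of exponent $3$). The irreducible $\mathbb F_2 K$-modules then correspond to $\mathrm{Gal}(\mathbb F_4/\mathbb F_2)$-orbits on such characters, and since the Frobenius $x\mapsto x^2$ sends $\chi$ to $\chi^{-1}$, the trivial character sits alone while every non-trivial orbit has size two. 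So apart from the trivial module, each irreducible $\mathbb F_2 K$-module is $2$-dimensional and can be realised concretely as $W_\chi\cong \mathbb F_4$ with $k\in K$ acting as multiplication by $\chi(k)$.

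Next I would pass from $K$ to $L$ using Clifford theory. The key observation is that $L/K$ acts on $K$ by inversion, hence on characters by $\chi\mapsto \chi^{-1}$, which coincides with the Galois action; consequently every irreducible $\mathbb F_2 K$-module is $L$-invariant. For each non-trivial $W_\chi\cong\mathbb F_4$, letting the involution $s\in L\setminus K$ act as the field Frobenius $x\mapsto x^2$ extends the $K$-action to an $\mathbb F_2 L$-module structure: the semidirect-product relation $sks^{-1}=k^{-1}$ becomes the identity $\chi(k)^2=\chi(k^{-1})$ in $\mathbb F_4$, which is immediate. Uniqueness of this extension follows from the Clifford correspondence: once an extension exists, the irreducible $\mathbb F_2 L$-modules whose restriction to $K$ is isotypic of type $W_\chi$ are in bijection with the irreducible $\mathbb F_2(L/K)$-modules, and $L/K\cong C_2$ admits only the trivial such module over $\mathbb F_2$. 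Combined with the obvious trivial case, this gives the required bijection and shows that every non-trivial irreducible $\mathbb F_2 L$-module has dimension two.

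The final clause then drops out by direct inspection on $V=W_\chi\cong\mathbb F_4$: an element $t\in K$ acts as multiplication by the scalar $\chi(t)\in\mathbb F_4$. If $\chi(t)=1$ then $t$ centralises $V$, giving $[t,V]=1$; otherwise $\chi(t)\ne 1$ and multiplication by the non-zero field element $\chi(t)-1$ is invertible on $\mathbb F_4$, so $t$ has no non-zero fixed vectors on $V$, i.e., $t$ is fixed point free.

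The main subtlety I anticipate is the modular aspect of Clifford theory: because $\mathrm{char}(\mathbb F_2)$ divides $|L/K|=2$, one cannot freely invoke the standard semisimple Clifford package. The cleanest route is the explicit construction above, exhibiting the extension of $W_\chi$ concretely so that uniqueness reduces to the trivial fact that $\mathbb F_2 C_2$ has a single irreducible module, rather than attempting a cohomological argument.
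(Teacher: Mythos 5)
Your proof is correct, but it reaches the conclusion by a genuinely different and far more explicit route than the paper's own argument. The paper's proof is a two-line observation: the number of irreducible $\mathbb F_2$-modules of a finite group equals the number of $2$-regular classes up to the twist $g\mapsto g^2$, and since $g^2=g^{-1}$ on $K=\op{3}{L}$ while the outer involution of $L$ also acts by inversion, $L$ and $K$ have the same count; combined with the (asserted) fact that every irreducible $\mathbb F_2L$-module restricts irreducibly to $K$, the bijection and the dimension/fixed-point statements follow from the structure of the $K$-modules. You instead build everything by hand: characters $\chi\colon K\to\mathbb F_4^\times$, Galois orbits of size two, the model $W_\chi\cong\mathbb F_4$, and the extension of $W_\chi$ to $L$ by letting the involution act as the field Frobenius. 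Your route buys a concrete model from which the ``moreover'' clause is a one-line computation, and it makes explicit the irreducibility of restrictions that the paper merely asserts; the paper's route buys brevity. The one soft spot, which you rightly flag yourself, is the appeal to ``the Clifford correspondence'' for uniqueness: the standard bijection $U\mapsto \tilde W\otimes U$ is usually proved over a splitting field, and here $\mathrm{End}_{\mathbb F_2K}(W_\chi)\cong\mathbb F_4$ with $L/K$ acting on it nontrivially, so the black box does not apply verbatim. The fix is immediate from your own construction: any irreducible $\mathbb F_2L$-module lying over $W_\chi$ is a quotient of $\mathrm{Ind}_K^L W_\chi\cong \tilde W_\chi\otimes_{\mathbb F_2}\mathbb F_2[L/K]$, and since $\mathbb F_2[L/K]\cong\mathbb F_2[C_2]$ is uniserial with trivial composition factors, the unique irreducible quotient is $\tilde W_\chi$. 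With that sentence in place your argument is complete and, if anything, more self-contained than the published one.
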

\begin{proof}
Observe that there are the same number of modules in each set and each irreducible $\mathbb F_2L$-module restricts to an irreducible $\mathbb F_2\op{3}{L}$-module. The nontrivial irreducible modules all have dimension 2, therefore the moreover part of the proposition follows.
\end{proof}

The following can be found as part of \cite[8.2.7 and 8.4.2]{kurzweilstellmacher}.

\begin{lem}[Coprime Action]
Let $G$ and $V$ be finite groups. Suppose that $G$ acts on $V$ and $(|G|,|V|)=1$. Then the following hold.
\begin{itemize}
\item[(a)] $[V,G,G] = [V,G]$.
\item[(b)] If $V$ is abelian then $V=\cent{V}{G} \times [V,G]$.
\end{itemize}
\end{lem}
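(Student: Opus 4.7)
My plan is to prove (b) first using the averaging trick afforded by coprime action, and then to deduce (a) from it. Since $(|G|,|V|)=1$, the integer $|G|$ is coprime to the exponent of $V$ and so is invertible as an endomorphism of the abelian group $V$ appearing in (b). Writing $V$ additively, define the averaging map $\pi\colon V\to V$ by
$$\pi(v)=|G|^{-1}\sum_{g\in G} g\cdot v.$$
A direct calculation shows $\pi(v)\in\cent{V}{G}$ for every $v\in V$, and that $\pi$ restricts to the identity on $\cent{V}{G}$. Hence $\pi$ is an idempotent $G$-equivariant endomorphism with image $\cent{V}{G}$, yielding $V=\cent{V}{G}\oplus\ker\pi$. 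It then remains to identify $\ker\pi$ with $[V,G]$: the inclusion $[V,G]\subseteq\ker\pi$ follows since $\pi([v,g])=\pi(v)-\pi(g\cdot v)=0$, while $V/[V,G]$ carries the trivial $G$-action so $\pi$ descends to the identity on this quotient, forcing any element of $\ker\pi$ to lie in $[V,G]$.

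For part (a), I would first settle the case that $V$ is abelian. Applying (b) gives $V=\cent{V}{G}\oplus[V,G]$, and a second application of (b) to the $G$-invariant abelian subgroup $[V,G]$ yields $[V,G]=\cent{[V,G]}{G}\oplus[V,G,G]$. But $\cent{[V,G]}{G}=\cent{V}{G}\cap[V,G]=0$ by the previous direct sum, hence $[V,G]=[V,G,G]$ as required. For general (possibly non-abelian) $V$, the identity $[V,G,G]=[V,G]$ follows by induction on $|V|$, reducing to the abelian case via a $G$-invariant normal series whose factors are abelian (for instance the derived series of $V$), together with the three subgroups lemma to propagate the identity from a factor to the full group.

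The main obstacle lies in the non-abelian case of (a): in the absence of a clean averaging projection, one must handle commutator identities across a normal series rather than invoking a direct sum decomposition, and the three subgroups lemma is needed to transfer the conclusion from abelian quotients to $V$ itself. Fortunately the result is entirely classical, appearing in the cited textbook references \cite[8.2.7 and 8.4.2]{kurzweilstellmacher}, so in the paper itself it suffices to quote the statement and refer the reader there rather than reproduce the argument sketched above.
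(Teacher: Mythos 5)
The paper offers no proof of this lemma at all: it is quoted verbatim from the cited textbook (\cite[8.2.7 and 8.4.2]{kurzweilstellmacher}), so any comparison is between your argument and the standard textbook one. Your proof of (b) by the averaging idempotent $\pi(v)=|G|^{-1}\sum_{g\in G}g\cdot v$ is correct and complete, and your deduction of the abelian case of (a) from (b) (two applications of the direct-sum decomposition, using $\cent{[V,G]}{G}=\cent{V}{G}\cap[V,G]=0$) is also correct. Since every application of the lemma in this paper is to an elementary abelian $2$-group ($Z_x$ and its quotients), the cases you have actually proved cover everything the paper needs.

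The genuine gap is in your treatment of the non-abelian case of (a). The proposed induction along the derived series does not obviously close: applying the inductive hypothesis to $V/V'$ only yields $[V,G]\leqslant [V,G,G]V'$, and $V'$ need bear no useful relation to $[V,G]$, so you cannot strip it off; moreover the natural reduction ``pass to $V/[V,G,G]$'' is problematic because $[V,G,G]$ need not be normal in $V$. The three subgroups lemma is also not the right tool: from $[V,G,G]=1$ and $[G,V,G]=1$ it gives only $[G,G,V]=1$, i.e.\ that $G'$ centralises $V$, which does not yield $[V,G]=1$. The standard route (and the one in the cited reference) is different: one first proves $V=\cent{V}{G}\,[V,G]$ for an \emph{arbitrary} finite $V$ with coprime action, using Schur--Zassenhaus and the conjugacy of complements in $V\rtimes G$, and then (a) follows in one line from the commutator identity $[AB,G]=[A,G]^{B}[B,G]$ applied to $A=\cent{V}{G}$, $B=[V,G]$. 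Alternatively, one can reduce to showing that a coprime action which is trivial on a $G$-invariant normal subgroup $N$ and on $V/N$ is trivial on $V$ (an order argument on $[v,g]\in N$), but either way some input beyond the abelian case and the three subgroups lemma is required. Your closing remark that the paper is entitled simply to cite the textbook is, of course, exactly what the paper does.
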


\section{Amalgam method}

Let $L$ be the group defined in Theorem~\ref{mainthm} and assume that $\Gamma$ is a connected graph with an arc-transitive group $G$ of automorphisms so that $(\Gamma, G)$ is locally $L$. We may assume that $\Gamma$ is a tree \cite[Chapter 1, \S 4]{serre} and by Tutte's Theorem that $n\geqslant 2$. For adjacent vertices $x$ and $y$ of $\Gamma$,
  we define $G_x^{[1]}$ to be the kernel of the action of $G_x$ on $\Gamma(x)$ and 
   $$G_{xy}^{[1]}=\vstx{1} \cap \vsty{1},$$ the kernel of the action of $G_x \cap G_y$ on  $\Gamma(x)\cup\Gamma(y)$. 

 The following lemma is central to  arguments involving the amalgam method.
   \begin{lem}
   \label{lem:fundlem}
Let $e=\{x,y\}$ be an edge of $\Gamma$ and suppose that $K\leqslant  G_x \cap G_y$. If either (a) or (b) below hold, then $K=1$.
\begin{itemize}
\item[(a)] $\norm{G_x}{K}$ and $\norm{G_y}{K}$ are transitive on $\Gamma(x)$ and $\Gamma(y)$ respectively.
\item[(b)] $\norm{G_x}{K}$ is transitive on $\Gamma(x)$ and $\norm{G_e}{K} \not\leqslant G_x \cap G_y$.
\end{itemize}
\end{lem}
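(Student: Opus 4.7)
My plan is first to reduce case (b) to case (a), and then to prove case (a) by showing $K$ fixes every vertex of $\Gamma$ via an induction on distance from the edge $\{x,y\}$.

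For the reduction, if (b) holds I pick $h \in \norm{G_e}{K}$ with $x^h = y$; such an $h$ exists by hypothesis and automatically satisfies $y^h = x$. Conjugation by $h$ sends $G_x$ to $G_y$ and normalises $K$, so $\norm{G_x}{K}^h = \norm{G_y}{K}$, and because $h$ is a graph automorphism carrying $\Gamma(x)$ bijectively onto $\Gamma(y)$, the transitivity of $\norm{G_x}{K}$ on $\Gamma(x)$ transfers to transitivity of $\norm{G_y}{K}$ on $\Gamma(y)$. Thus (a) now holds.

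Assuming (a), I would first argue that $K$ fixes $\Gamma(x)$ pointwise: for each $z \in \Gamma(x)$, pick $g \in \norm{G_x}{K}$ with $y^g = z$, and note that $K^g = K$ fixes $z = y^g$; symmetrically $K$ fixes $\Gamma(y)$ pointwise. The key observation for the induction is that this same $g$ shows condition (a) is also satisfied by the edge $\{x,z\}$: conjugation by $g$ gives $\norm{G_z}{K} = \norm{G_y}{K}^g$, which acts transitively on $\Gamma(y)^g = \Gamma(z)$, while $\norm{G_x}{K}$ is transitive on $\Gamma(x)$ by hypothesis. Iterating this step, I can cover all vertices at each successive distance from $\{x,y\}$. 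Since $\Gamma$ is connected, $K$ fixes every vertex of $\Gamma$, and since $G$ acts faithfully on $V(\Gamma)$ this forces $K = 1$.

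There is no real obstacle in this argument. Its entire content is the observation that both the transitive-normaliser hypothesis and the vertex-fixing property are preserved when we conjugate by an element of $G$; the only care needed is to formulate the induction so that we simultaneously propagate fixed vertices and the transitive normaliser-action on their neighbourhoods.
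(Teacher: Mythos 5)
Your proof is correct and is essentially the paper's argument: the paper packages your induction as the single observation that $H=\grp{\norm{G_x}{K},\norm{G_y}{K}}\leqslant\norm{G}{K}$ acts edge-transitively on the connected graph $\Gamma$, so that conjugating the containment $K\leqslant G_x\cap G_y$ by elements of $H$ shows $K$ fixes every vertex. Your reduction of (b) to (a) via an element of $\norm{G_e}{K}$ interchanging $x$ and $y$ is also fine (the paper simply treats case (b) as ``similar'' and omits it).
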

\begin{proof}
Suppose that (a) holds and set $H = \grp{ \norm{G_x}{K},\norm{G_y}{K}} \leqslant \norm{G}{K}$. Since $\Gamma$ is connected, $H$ acts edge-transitively. Let $u$ be any vertex of $\Gamma$ and let $v$ be adjacent to $u$. Then there exists $h\in H$ such that $\{x,y\}^h = \{u,v\}$. Now we obtain 
$$K = K^h \leqslant  (G_x \cap G_y)^h = G_u \cap G_ v \leqslant  G_{u}$$
whence $K$ fixes every vertex of $\Gamma$, and therefore $K=1$. The case that (b) holds is similar and is omitted.
\end{proof}

We fix an edge $e=\{x,y\}$. Note that by edge-transitivity  statements proved about the edge $e$ apply to arbitrary edges. We may assume $G_{xy}^{[1]} \neq 1$.

\begin{lem}
\label{lem:isom of ge/gxy1 and fact of gxy}
The group $G_e$ is a 2-group, $G_e/G_{xy}^{[1]} \cong \mathrm{Dih}(8)$ and $G_x^{[1]}G_y^{[1]}=G_x \cap G_y$.
\end{lem}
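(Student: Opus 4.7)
The plan is to determine the structure of $G_e/G_{xy}^{[1]}$ through successive quotients, and then identify $G_{xy}^{[1]}$ as a $2$-group.

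First, since the point stabilisers in $L$ have order $2$, the action of $G_x \cap G_y$ on $\Gamma(x)$ (fixing $y$) has image of order $2$; that is, $G_x^{[1]}$ has index $2$ in $G_x \cap G_y$, and symmetrically $G_y^{[1]}$ has index $2$ in $G_x \cap G_y$. In particular both subgroups are normal in $G_x \cap G_y$. If $G_x^{[1]} = G_y^{[1]}$, then this common subgroup would be normalised by both $G_x$ and $G_y$, and Lemma~\ref{lem:fundlem}(a) would force it to be trivial, contradicting the standing assumption $G_{xy}^{[1]} \neq 1$. Hence $G_x^{[1]} \neq G_y^{[1]}$, and being two distinct index-$2$ subgroups their product strictly contains $G_x^{[1]}$ and therefore coincides with $G_x \cap G_y$. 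This is the third assertion.

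Next, the inclusion $G_x^{[1]} \hookrightarrow G_x \cap G_y$ followed by reduction mod $G_y^{[1]}$ has kernel $G_{xy}^{[1]}$ and non-trivial image (as $G_x^{[1]} \not\leqslant G_y^{[1]}$), giving $|G_x^{[1]}/G_{xy}^{[1]}| = 2$, and symmetrically $|G_y^{[1]}/G_{xy}^{[1]}| = 2$. These two distinct order-$2$ subgroups generate $(G_x \cap G_y)/G_{xy}^{[1]}$, which is therefore a Klein four group. Arc-transitivity gives $[G_e : G_x \cap G_y] = 2$, so $|G_e/G_{xy}^{[1]}| = 8$. Any $g \in G_e \setminus (G_x \cap G_y)$ swaps $x$ and $y$, hence conjugates $G_x^{[1]}$ onto $G_y^{[1]}$, and thus acts on the Klein four quotient by exchanging its two displayed $C_2$-factors. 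The only group of order $8$ realising a normal Klein four subgroup together with an outer element acting in this non-trivial way is $\mathrm{Dih}(8)$, as the abelian options $C_2^3$ and $C_2 \times C_4$ have trivial conjugation action on $C_2 \times C_2$. This gives the second assertion.

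Finally, the Thompson-Wielandt theorem of Spiga \cite{spigath-w} (recalled in the introduction) asserts that $G_{xy}^{[1]}$ is a $p$-group for some prime $p$; in our setting $p$ must divide $|L_y| = 2$, so $p = 2$ and therefore $G_e$ is a $2$-group. This is the first assertion.

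The main obstacle I expect is the $\mathrm{Dih}(8)$ identification: one must handle both the case where $g^2 \in G_{xy}^{[1]}$ (so $g$ is already an involution modulo $G_{xy}^{[1]}$) and the case where $g^2$ maps to the unique non-trivial fixed element of the Klein four group under the swap, where one replaces $g$ by $ga$ for a suitable $a$ to expose an involution outside $G_x \cap G_y$.
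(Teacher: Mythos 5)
Your proofs of the second and third assertions coincide with the paper's: $G_x^{[1]}\neq G_y^{[1]}$ via Lemma~\ref{lem:fundlem}(a) and the standing assumption $G_{xy}^{[1]}\neq 1$, the index-two counts giving $G_x^{[1]}G_y^{[1]}=G_x\cap G_y$ and a Klein four quotient $(G_x\cap G_y)/G_{xy}^{[1]}$, and an edge-reversing element acting nontrivially on it to force $\mathrm{Dih}(8)$. (Your closing worry about whether $g^2\in G_{xy}^{[1]}$ is already moot: among the groups of order $8$ only $\mathrm{Dih}(8)$ even contains a Klein four subgroup admitting a nontrivial conjugation action, since $Q_8$ has a unique involution; you should say a word ruling out $Q_8$ rather than only the abelian groups, but the conclusion stands.)

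The genuine gap is in the first assertion. The paper does not invoke Spiga's Thompson--Wielandt theorem at all; it proves the $2$-group claim directly: for an odd prime $r$ and $S\in\syl{r}{G_x^{[1]}}$ one has $S\leqslant G_{xy}^{[1]}$ (as $|G_x^{[1]}:G_{xy}^{[1]}|=2$), and the Frattini argument gives $G_x=\norm{G_x}{S}G_x^{[1]}$ and $G_e=\norm{G_e}{S}G_{xy}^{[1]}$, so $\norm{G_x}{S}$ is transitive on $\Gamma(x)$ and $\norm{G_e}{S}\not\leqslant G_x\cap G_y$, whence $S=1$ by Lemma~\ref{lem:fundlem}(b). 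Your route instead cites the theorem that $G_{xy}^{[1]}$ is a $p$-group and then asserts that ``$p$ must divide $|L_y|=2$'' with no justification. The theorem as quoted gives no control over which prime $p$ occurs, and $|L_y|$ constrains only the quotient $G_x^{[1]}/G_{xy}^{[1]}$, not the kernel $G_{xy}^{[1]}$ itself, so this step does not follow as stated. It is true, but needs an argument such as: if $p$ were odd, then $G_{xy}^{[1]}$, being a normal $p$-subgroup of index $2$ in $G_x^{[1]}$, would be the unique Sylow $p$-subgroup of $G_x^{[1]}$, hence characteristic in $G_x^{[1]}\trianglelefteq G_x$ and therefore normal in $G_x$, and symmetrically normal in $G_y$; Lemma~\ref{lem:fundlem}(a) would then give $G_{xy}^{[1]}=1$, contrary to hypothesis. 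Once you add this, your proof closes --- though at that point Thompson--Wielandt is doing essentially no work, since the paper's Frattini argument reaches the same conclusion for every odd Sylow subgroup of $G_x^{[1]}$ without knowing in advance that $G_{xy}^{[1]}$ is a $p$-group.
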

\begin{proof}
Since $\vst{xy}{1} \neq 1$ we have $\vst{x}{1} \neq 1$ and so  $G_x^{[1]}\neq G_y^{[1]}$ by Lemma~\ref{lem:fundlem}. Then as $|(G_x \cap G_y)/G_x^{[1]}|=2$ we have $G_x^{[1]}G_y^{[1]}=G_x \cap G_y$ and $(G_x \cap G_y)/G_{xy}^{[1]}$ is elementary abelian of order four. Moreover, there is $t \in G_e - G_x \cap G_y$ which swaps $G_x^{[1]}$ and $G_y^{[1]}$ and $|G_e/G_{xy}^{[1]}|=8$ hence $G_e/G_{xy}^{[1]} \cong \dih{8}$.

The previous paragraph shows that for any prime $r\neq 2$ we have $\syl{r}{G_{xy}^{[1]}}=\syl{r}{G_x^{[1]}}$. Taking $S\in \syl{r}{G_x^{[1]}}$ the Frattini argument gives $G_x=\norm{G_x}{S} G_x^{[1]}$ and $G_e = \norm{G_e}{S}G_{xy}^{[1]}$.  In particular $\norm{G_x}{S}$ is transitive on $\Gamma(x)$ and $\norm{G_e}{S} \not\leqslant \vst{xy}{1}$. Now Lemma~\ref{lem:fundlem} gives $S=1$.
\end{proof}

For an edge $\{u,w\}$ of $\Gamma$ we define the following subgroups:
\begin{eqnarray*}
Q_v &	= & 	G_v^{[1]},\\
Z_{uw} 	&=	& \Omega_1(Z(G_u \cap G_w)), \\
Z_u 	&=	& \langle Z_{uw} ^{G_u} \rangle.
\end{eqnarray*}

Recall that $\Omega_1(P) = \langle x \mid x^p =1 \rangle$ for a $p$-group $P$  and note that since $G_u \cap G_w$ is a 2-group we have $Z_{uw}\neq 1$.

\begin{prop}\label{p:zxcentral}
$Z_x \leqslant \Omega_1(\zent{Q_x})$.
\end{prop}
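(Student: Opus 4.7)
The plan is to reduce the claim to showing $Z_{xy} \leqslant \Omega_1(Z(Q_x))$. Indeed, $\Omega_1(Z(Q_x))$ is characteristic in $Q_x$, and $Q_x$ is normal in $G_x$ by definition, so $\Omega_1(Z(Q_x)) \trianglelefteq G_x$; hence $Z_x = \langle Z_{xy}^{G_x} \rangle$ will be contained in it provided $Z_{xy}$ is.

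To show $Z_{xy} \leqslant \Omega_1(Z(Q_x))$, first note that $Q_x = G_x^{[1]}$ fixes $y$, so $Q_x \leqslant G_x \cap G_y$, whence $Z_{xy} = \Omega_1(Z(G_x \cap G_y))$ centralises $Q_x$. In particular $Z_{xy} \leqslant C_{G_x}(Q_x)$, and since the centraliser of a normal subgroup is itself normal, the image $\ba C$ of $C_{G_x}(Q_x)$ in $L = G_x/Q_x$ is a normal subgroup of $L$. From the preliminaries, every proper normal subgroup of $L$ lies in the Frobenius kernel and is a $3$-group. Assuming $\ba C$ is proper in $L$, the image of the $2$-group $Z_{xy}$ inside $\ba C$ is both a $2$-group and a subgroup of a $3$-group, hence trivial, giving $Z_{xy} \leqslant Q_x$. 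Combining this with $Z_{xy}$ centralising $Q_x$ and having exponent at most $2$, we obtain $Z_{xy} \leqslant \Omega_1(Z(Q_x))$, as required.

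The principal obstacle is the alternative $\ba C = L$, in which the argument just given does not apply. To exclude it I would argue by contradiction: if $\ba C = L$ then $G_x = C_{G_x}(Q_x) \cdot Q_x$, and the symmetric statement $\overline{C_{G_y}(Q_y)} = L$ holds at $y$, using that $Z_{xy} = Z_{yx}$ is preserved by $G_e$ and that the swap in $G_e$ exchanges the roles of $x$ and $y$. I would then identify a characteristic subgroup $K$ of the nontrivial subgroup $G_{xy}^{[1]}$ such that $N_{G_x}(K)$ and $N_{G_y}(K)$ are transitive on $\Gamma(x)$ and $\Gamma(y)$ respectively, and apply Lemma~\ref{lem:fundlem} to force $K = 1$, contradicting $G_{xy}^{[1]} \neq 1$. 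Verifying these transitivity hypotheses is where I would expect to use Propositions~\ref{genofx} and~\ref{p:Xmodules} on the generation of $L$ and the structure of its nontrivial $\mathbb F_2$-modules.
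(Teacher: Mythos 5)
Your reduction to showing $Z_{xy}\leqslant \Omega_1(\zent{Q_x})$, and your treatment of the case where the image $\ba{C}$ of $\cent{G_x}{Q_x}$ in $L$ is a proper normal subgroup, are correct and close to the paper's argument. The genuine gap is the case $\ba{C}=L$: you only outline a plan and do not execute it, and the tools you expect to need (Propositions~\ref{genofx} and~\ref{p:Xmodules}) are not what is required --- no generation or module-theoretic input enters here. The observation you are missing is that $K=G_{xy}^{[1]}$ itself already does the job. Since $G_{xy}^{[1]}\leqslant Q_x$, the group $\cent{G_x}{Q_x}$ centralises, hence normalises, $G_{xy}^{[1]}$; so if $\ba{C}=L$ then $\norm{G_x}{G_{xy}^{[1]}}$ is transitive on $\Gamma(x)$. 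Moreover $G_{xy}^{[1]}$ is normal in $G_e$, so $\norm{G_e}{G_{xy}^{[1]}}=G_e\not\leqslant G_x\cap G_y$, and Lemma~\ref{lem:fundlem}(b) forces $G_{xy}^{[1]}=1$, contradicting the standing assumption $G_{xy}^{[1]}\neq 1$. (Your symmetric variant at $y$ feeding into part (a) of the lemma would also work, via conjugation by an arc-reversing element of $G_e$, but it is not needed.)

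For comparison, the paper runs the same ingredients in the opposite order: it first applies Lemma~\ref{lem:fundlem}(b) with $K=G_{xy}^{[1]}$ exactly as above to conclude that $\cent{G_x}{Q_x}$ is intransitive on $\Gamma(x)$, then uses semiprimitivity to see that its image in $L$ is semiregular, hence of odd order, so that $\zent{Q_x}$ is the unique Sylow $2$-subgroup of $\cent{G_x}{Q_x}$ and the $2$-group $Z_{xy}\leqslant\cent{G_x}{Q_x}$ lies in $\zent{Q_x}$. Your alternative in the intransitive case --- that a proper normal subgroup of $L$ is a $3$-group, so $Z_{xy}\leqslant Q_x\cap\cent{G_x}{Q_x}=\zent{Q_x}$ --- is an equally valid finish; the only missing piece of your proof is the explicit exclusion of the transitive case described above.
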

\begin{proof}
Since $\cent{G_x}{Q_x} \leqslant \cent{G_x}{\vst{xy}{1}}$ and $\vst{xy}{1}$ is a nontrivial normal subgroup of $G_e$, by Lemma~\ref{lem:fundlem}(b) we have that  $\cent{G_x}{Q_x}$ is intransitive on $\Gamma(x)$. Since $\cent{G_x}{Q_x} \trianglelefteq G_x$ it follows that $\cent{G_x}{Q_x}$ is semiregular on $\Gamma(x)$ and so $2\nmid|\cent{G_x}{Q_x}:\cent{Q_x}{Q_x}|$. In particular, $\zent{Q_x}$ is a Sylow 2-subgroup of $\cent{G_x}{Q_x}$. We have that $[Q_x,Z_{xy}]\leqslant [G_x \cap G_y,Z(G_x \cap G_y)]=1$ so that $Z_{xy}$ centralises $Q_x$ and therefore $Z_{xy} \leqslant \zent{Q_x}$.  By the definition of $Z_{xy}$ we get $Z_{xy} \leqslant \Omega_1(\zent{Q_x})$. Since $\Omega_1(\zent{Q_x})$ is a characteristic subgroup of $Q_x$ and $G_x$ normalises $Q_x$, we obtain the result.
\end{proof}

The following diagram helps keep track of the subgroups defined so far.

\begin{center}
\begin{tikzpicture}[scale=0.8]

\draw (0,0) node (p12) [label=right:${\ \ G_x \cap G_y}$,draw,circle,fill=black,minimum size=2pt,
                            inner sep=0pt] {}
-- ++(135:2.0cm) node (p1) [label=left:$G_x$,draw,circle,fill=black,minimum size=2pt,
                            inner sep=0pt] {};
\draw (p12) -- ++(45:2.0cm) node (p2) [label=right:$G_y$,draw,circle,fill=black,minimum size=2pt,
                            inner sep=0pt] {};
\draw (p12) -- ++(-45:2.0cm) node (gy1) [label=right:${ G_y^{[1]}=Q_y }$,draw,circle,fill=black,minimum size=2pt,
                            inner sep=0pt] {};
\draw (p12) -- ++(-135:2.0cm) node (gx1) [label=left:${ Q_x=G_x^{[1]}}$,draw,circle,fill=black,minimum size=2pt,
                            inner sep=0pt] {};
\draw (gx1) -- ++(-45:2.0cm) node (gxy1) [label=left:${ G_{xy}^{[1]} \  }$, draw, circle, fill=black,minimum size=2pt, inner sep=0pt] {};

\draw (gxy1) -- ++(-90:2.0cm) node (zxy) [label=left:${ Z_{xy}\ \  }$, draw, circle, fill=black,minimum size=2pt, inner sep=0pt] {};

\draw (gy1) -- (gxy1);

\draw (gx1) -- ++(-90:2.0cm) node (gy2) [label=left:${ Z_x }$, draw, circle, fill=black,minimum size=2pt, inner sep=0pt] {};

\draw (gy1) -- ++(-90:2.0cm) node (gx2) [label = right:$ { Z_y }$, draw, circle, fill=black, minimum size=2pt,inner sep=0pt] {};
\draw(gx2)--(zxy);\draw(gy2)--(zxy);
\end{tikzpicture}
\end{center}

\begin{prop}
\label{prop:centraliser of zx}
The group $\cent{G_x}{Z_x}$ is intransitive on $\Gamma(x)$. Moreover, $Q_x$ is a Sylow 2-subgroup of $\cent{G_x}{Z_x}$.
\end{prop}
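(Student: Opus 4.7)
The plan is to argue by contradiction using Lemma~\ref{lem:fundlem}(a) applied to the subgroup $Z_{xy}$. Suppose that $\cent{G_x}{Z_x}$ is transitive on $\Gamma(x)$. Since $(\Gamma,G)$ is vertex-transitive the same property then holds at $y$, so $\cent{G_y}{Z_y}$ is transitive on $\Gamma(y)$.

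The key observation is that $Z_{xy}\leqslant Z_x\cap Z_y$. Indeed $Z_{xy}=\Omega_1(\zent{G_x\cap G_y})$ is symmetric in $x,y$, and appears as the identity conjugate inside both of the normal closures $Z_x=\langle Z_{xy}^{G_x}\rangle$ and $Z_y=\langle Z_{xy}^{G_y}\rangle$. Hence anything centralising $Z_x$ normalises its subgroup $Z_{xy}$, giving $\cent{G_x}{Z_x}\leqslant \norm{G_x}{Z_{xy}}$; the analogous inclusion holds at $y$. Under our assumption both $\norm{G_x}{Z_{xy}}$ and $\norm{G_y}{Z_{xy}}$ are then transitive on $\Gamma(x)$ and $\Gamma(y)$ respectively, and so Lemma~\ref{lem:fundlem}(a) forces $Z_{xy}=1$. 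But Lemma~\ref{lem:isom of ge/gxy1 and fact of gxy} tells us that $G_x\cap G_y$ is a non-trivial $2$-group, so its centre is non-trivial and contains involutions, whence $Z_{xy}\neq 1$, a contradiction.

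For the moreover statement I would exploit the normal-subgroup structure of $L$. Proposition~\ref{p:zxcentral} gives $Q_x\leqslant \cent{G_x}{Z_x}$, and $\cent{G_x}{Z_x}$ is normal in $G_x$ because $Z_x$ is. Thus $\cent{G_x}{Z_x}/Q_x$ is a normal subgroup of $L\cong 3^n{:}2$ which, by the first part, is intransitive. Every proper normal subgroup of $L$ lies in the Frobenius kernel $\op{3}{L}$ (any subgroup of this elementary abelian $3$-group is $L$-invariant, since the complement inverts it), and so has odd order; since $Q_x$ is itself a $2$-group, it follows that $Q_x$ is a Sylow $2$-subgroup of $\cent{G_x}{Z_x}$.

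I expect the only subtle point to be the transfer of transitivity from $\cent{G_x}{Z_x}$ to $\norm{G_x}{Z_{xy}}$ via the inclusion $Z_{xy}\leqslant Z_x$, together with the symmetric use of the hypothesis at $y$ justified by vertex-transitivity. Once those ingredients are in place, everything else reduces to an application of the fundamental Lemma~\ref{lem:fundlem} and to the elementary normal-subgroup structure of the semiprimitive group $L$.
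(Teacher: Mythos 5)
Your proof is correct and follows essentially the same route as the paper's: the intransitivity statement is obtained by feeding the nontrivial subgroup $Z_{xy}\leqslant Z_x\cap Z_y$ into Lemma~\ref{lem:fundlem} (the paper uses part (b) with $\grp{\cent{G_x}{Z_x},G_e}$ where you use part (a) with the two vertex normalisers and vertex-transitivity, an immaterial difference), and the Sylow statement comes from the fact that the intransitive normal image of $\cent{G_x}{Z_x}$ in $G_x/Q_x\cong L$ has odd order, which is exactly the paper's appeal to semiprimitivity. The only cosmetic quibble is that your parenthetical remark justifies why subgroups of $\op{3}{L}$ are normal in $L$ rather than why proper normal subgroups of $L$ lie in $\op{3}{L}$; the latter fact is already recorded in the paper's preliminaries.
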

\begin{proof}
Since $Z_{xy}$ is a non-trivial subgroup of $G_x \cap G_y$ which is normalised by $\grp{\cent{G_x}{Z_x},G_e}$, the first part of the proposition holds by Lemma~\ref{lem:fundlem}(b). For the second part, since the action of $G_x$ on $\Gamma(x)$ is semiprimitive, we have that $\cent{G_x}{Z_x} \cap (G_x \cap G_y) = Q_x$. Now the claim follows since $G_x \cap G_y$ is a Sylow 2-subgroup of $G_x$.
\end{proof}

Now since $Z_x$ is non-trivial, there exists $v\in \Gamma$ such that $Z_x \not\leqslant G_v^{[1]}$. We set 
$$b=\min_{v \in \Gamma} \{\mathrm d(x,v) \mid Z_x \not\leqslant G_v^{[1]} \},$$
the parameter $b$ is called the \emph{critical distance}. Note that by vertex-transitivity, it suffices to ``measure'' the critical distance at $x$. Our goal is to bound $b$; we will see in Lemma~\ref{lem:boundonbisgood} that this enables us to bound the order of $G_x$. 

 The set of \emph{critical pairs} is 
$$\mathcal C = \{ (u,v) \mid \mathrm d(u,v) = b\text{ and } Z_u \not\leqslant G_v^{[1]} \}.$$
Examining the elements of $\mathcal C$ allows us to prove that a bound on $b$ exists.

\begin{prop}
If $(x,v) \in \mathcal C$ then $(v,x) \in \mathcal C$.
\end{prop}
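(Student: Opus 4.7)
The plan is to argue by contradiction. Suppose $(x,v) \in \mathcal C$ but $(v,x) \notin \mathcal C$; since $d(v,x) = b$ automatically, this means $Z_v \leqslant Q_x$, and the goal is to derive a contradiction. A preliminary step is to record that $Z_x \leqslant G_v$: by the minimality of $b$, for every $u$ with $d(x,u) < b$ we have $Z_x \leqslant Q_u$, so $Z_x$ fixes $u$ together with every neighbour of $u$; walking along a shortest path from $x$ to $v$ then yields $Z_x \leqslant G_v$.

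Next, combining Proposition~\ref{p:zxcentral} (which gives $Z_x \leqslant \zent{Q_x}$) with the hypothesis $Z_v \leqslant Q_x$ yields
\[
[Z_x, Z_v] \leqslant [\zent{Q_x}, Q_x] = 1,
\]
so that $Z_x \leqslant \cent{G_v}{Z_v}$. Now I would apply Proposition~\ref{prop:centraliser of zx} at the vertex $v$: it asserts that $Q_v$ is a Sylow $2$-subgroup of $\cent{G_v}{Z_v}$. Because $Z_v \leqslant \zent{Q_v}$ we have $Q_v \leqslant \cent{G_v}{Z_v}$, and $Q_v \trianglelefteq G_v$ forces $Q_v \trianglelefteq \cent{G_v}{Z_v}$. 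A normal Sylow subgroup is unique, so every $2$-subgroup of $\cent{G_v}{Z_v}$, and in particular the elementary abelian $Z_x$, is contained in $Q_v$. This contradicts $Z_x \not\leqslant Q_v$, so $Z_v \not\leqslant Q_x$ and $(v,x) \in \mathcal C$.

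I do not expect a serious obstacle here: the structural propositions about $Z_x$ and $\cent{G_x}{Z_x}$ are tailored to exactly this kind of swap argument, and the only minor subtlety is confirming $Z_x \leqslant G_v$ at the outset so that the centraliser proposition can legitimately be transferred to the $v$-side.
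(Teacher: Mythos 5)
Your proof is correct and follows essentially the same route as the paper: assume $Z_v\leqslant Q_x$, deduce $[Z_x,Z_v]=1$ from Proposition~\ref{p:zxcentral}, use minimality of $b$ along the path to get $Z_x\leqslant G_v$, and then invoke Proposition~\ref{prop:centraliser of zx} to force $Z_x\leqslant Q_v$, contradicting $(x,v)\in\mathcal C$. The only difference is that you spell out why $Q_v$ is the unique Sylow $2$-subgroup of $\cent{G_v}{Z_v}$ (normality in $G_v$), a detail the paper leaves implicit.
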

\begin{proof}
 Suppose that $Z_v \leqslant  \vst{x}{1}$. Then Proposition \ref{p:zxcentral} implies that $[Z_v,Z_x]=1$.   Let $w$ be the unique vertex in $\Gamma(v)$ with $\mathrm d(x,w)=\mathrm d(x,v)-1$, by the minimality of $b$ we have $Z_x \leqslant G_w^{[1]}$ which gives $Z_x \leqslant  G_{vw} \leqslant  G_v$. Now $Z_x \leqslant \cent{G_v}{Z_v}$ which by Proposition  \ref{prop:centraliser of zx} has unique Sylow 2-subgroup $Q_v$. This implies $Z_x \leqslant  Q_v$, a contradiction to $(x,v) \in \mathcal C$.
\end{proof}

Fix critical pair $(x,v) \in \mathcal C$. We write the (unique) path between $x$ and $v$ as
$$(x,x+1,x+2,\dots,v-2,v-1,v).$$
We may assume that $y=x+1$ by arc-transitivity. The following diagram shows some inclusions between the subgroups we have defined so far.
\begin{figure}

\begin{center}
\begin{tikzpicture}[scale=0.8]

\draw (0,0) node (p12) [label=left:${\ \ G_x\cap G_{x+1}}$,draw,circle,fill=black,minimum size=2pt,
                            inner sep=0pt] {}
-- ++(135:2.0cm) node (p1) [label=above:$G_x$,draw,circle,fill=black,minimum size=2pt,
                            inner sep=0pt] {};
\draw (p12) -- ++(45:2.0cm) node (p2) [label=above:$G_{x+1}$,draw,circle,fill=black,minimum size=2pt,
                            inner sep=0pt] {};
\draw (p12) -- ++(-45:2.0cm) node (gy1) [label=above:${\ \ \ \ Q_{x+1} }$,draw,circle,fill=black,minimum size=2pt,
                            inner sep=0pt] {};
\draw (p12) -- ++(-135:2.0cm) node (gx1) [label=left:${ Q_x}$,draw,circle,fill=black,minimum size=2pt,
                            inner sep=0pt] {};
\draw (gx1) -- ++(-45:4.0cm) node (zx) [label=left:${ Z_x }$, draw, circle, fill=black,minimum size=2pt, inner sep=0pt] {};
\draw (gy1) -- ++(-45:4.0cm) node (zv) [label=right:${ Z_v }$, draw, circle, fill=black,minimum size=2pt, inner sep=0pt] {};
\draw (zx) -- ++(-45:2.0cm) node (zxv) [label=below:${ [Z_x,Z_v] }$, draw, circle, fill=black,minimum size=2pt, inner sep=0pt] {};
\draw (zxv) --  (zv)  ;
\draw (zx) -- ++(45:4.0cm) node (qv-1) [label=above:${ Q_{v-1}\ \ \ \  }$, draw, circle, fill=black,minimum size=2pt, inner sep=0pt] {};
\draw (zv) -- ++(45:4.0cm) node (qv) [label=right:${ Q_{v} }$, draw, circle, fill=black,minimum size=2pt, inner sep=0pt] {};
\draw (qv) -- ++(135:2.0cm) node (gvv-1) [label=right:${ G_{v} \cap G_{v-1} }$, draw, circle, fill=black,minimum size=2pt, inner sep=0pt] {};
\draw (gvv-1) -- ++(135:2.0cm) node (gv-1) [label=above:${  G_{v-1} }$, draw, circle, fill=black,minimum size=2pt, inner sep=0pt] {};
\draw (qv-1) -- ++(45:4.0cm) node (gv) [label=above:${  G_{v} }$, draw, circle, fill=black,minimum size=2pt, inner sep=0pt] {};
\end{tikzpicture}
\end{center}
\caption{Some inclusions of subgroups}
\label{figure}
\end{figure}
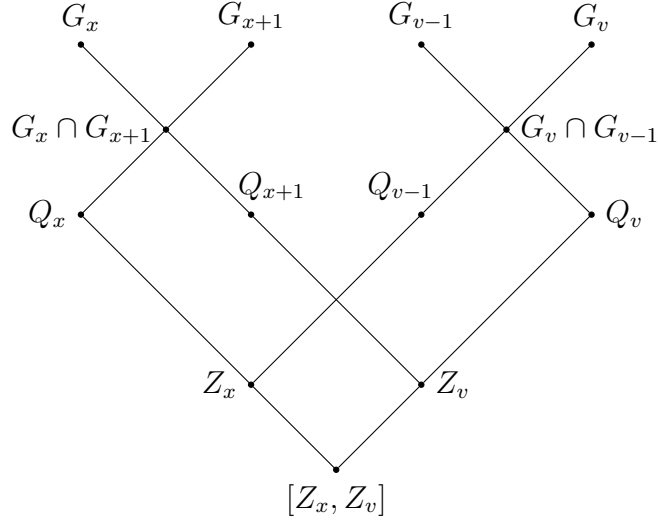
\begin{prop}
\label{prop:szzx/zxcapqv}
The following hold.
\begin{itemize}
\item[(i)] $|Z_x Q_v / Q_v  | = 2 = |Z_v Q_x / Q_x |$, $G_{v-1}\cap G_v=Z_xQ_v$ and $G_x \cap G_{x+1}=Z_vQ_x$.
\item[(ii)] $|Z_x / \cent{Z_x}{G_x \cap G_{x+1}}| =2$.
 \item[(iii)]  $Z_{xx+1}=\cent{Z_x}{G_x\cap G_{x+1}}$. 
\item[(iv)] $|Z_x / Z_{xx+1} | = 2=|Z_xZ_{x+1}/Z_x|$.
\end{itemize}
\end{prop}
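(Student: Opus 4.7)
The plan is to prove the four parts in the order (i), (iii), (ii), (iv), since (ii) relies on (iii) and (iv) relies on both.

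For (i), the quotient $G_v/Q_v \cong L$ is a Frobenius group of order $2\cdot 3^n$, so its $2$-subgroups have order at most $2$ (they are the point stabilisers). By the minimality of $b$, $Z_x \leqslant Q_{v-1} \leqslant G_v$, so $Z_xQ_v/Q_v$ is a $2$-subgroup of $G_v/Q_v$; it is nontrivial by the critical pair condition $Z_x \not\leqslant Q_v$, and so has order exactly $2$. Since $Z_x \leqslant Q_{v-1} \leqslant G_{v-1}$, the image $Z_xQ_v/Q_v$ also fixes $v-1$, so equals the stabiliser of $v-1$ in $G_v/Q_v$; lifting gives $Z_xQ_v = G_{v-1}\cap G_v$. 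The companion equality $Z_vQ_x = G_x \cap G_{x+1}$ follows by applying the same argument to the symmetric critical pair $(v,x)$ afforded by the preceding proposition. For (iii), $Z_{xy} \leqslant C_{Z_x}(G_x \cap G_y)$ is immediate from $Z_{xy} \leqslant Z(G_x \cap G_y) \cap Z_x$; conversely, $C_{Z_x}(G_x \cap G_y)$ is an elementary abelian $2$-subgroup of $Z(G_x \cap G_y)$ (since $Z_x \leqslant \Omega_1(Z(Q_x))$), hence lies in $\Omega_1(Z(G_x \cap G_y)) = Z_{xy}$.

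Statement (ii) is the main step, and I expect the upper bound $|Z_x/Z_{xy}| \leqslant 2$ to be the hard part. Rather than attack the $\mathbb F_2L$-module structure of $Z_x$ directly (where non-semisimplicity is an obstacle), the plan is to exploit the mutual action of $Z_x$ and $Z_v$ that (i) provides. By (i) and $Z_x \leqslant Z(Q_x)$, the conjugation action of $Z_v$ on $Z_x$ factors through $Z_vQ_x/Q_x$ of order $2$; for any $z \in Z_v \setminus Q_x$ the image in $G_x/Q_x$ is the involution stabilising $y$, so $C_{Z_x}(z) = C_{Z_x}(G_x \cap G_y) = Z_{xy}$ by (iii), and therefore $C_{Z_x}(Z_v) = Z_{xy}$. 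Symmetrically, the action of $Z_x$ on $Z_v$ factors through $Z_xQ_v/Q_v$ of order $2$ (since $Q_v \leqslant C_{G_v}(Z_v)$), so $|Z_x/Z_{xy}| = |Z_x/C_{Z_x}(Z_v)| \leqslant 2$. For the lower bound, Proposition~\ref{prop:centraliser of zx} applied at $v$ shows that $Q_v$ is a normal --- hence unique --- Sylow $2$-subgroup of $C_{G_v}(Z_v)$; if $Z_x$ centralised $Z_v$, the $2$-group $Z_x$ would lie in $Q_v$, contradicting $(x,v) \in \mathcal C$. Thus $|Z_x/Z_{xy}| = 2$.

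Finally, (iv) reduces to (ii) together with the identity $Z_x \cap Z_y = Z_{xy}$. The containment $Z_{xy} \leqslant Z_x \cap Z_y$ is immediate; conversely, $Z_x \cap Z_y$ is centralised by both $Q_x$ and $Q_y$ (lying in their respective centres), hence by $Q_xQ_y = G_x \cap G_y$ via Lemma~\ref{lem:isom of ge/gxy1 and fact of gxy}, so it sits inside $C_{Z_x}(G_x \cap G_y) = Z_{xy}$ by (iii). Therefore $|Z_xZ_y/Z_x| = |Z_y/(Z_x \cap Z_y)| = |Z_y/Z_{xy}| = 2$, the final equality being (ii) applied to the edge $\{y,x\}$ via arc-transitivity.
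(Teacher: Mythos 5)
Your proposal is correct and follows essentially the same route as the paper: part (i) from the order-$2$ point stabilisers in $G_v/Q_v\cong L$, part (ii) by identifying $\cent{Z_x}{G_x\cap G_{x+1}}$ with $\cent{Z_x}{Z_v}=Z_x\cap Q_v$ via Proposition~\ref{prop:centraliser of zx}, and part (iii) from $Z_x\leqslant\Omega_1(\zent{Q_x})$. The only differences are cosmetic — you prove (iii) before (ii) and make explicit the identity $Z_x\cap Z_{x+1}=Z_{x,x+1}$ that the paper leaves implicit in deducing (iv).
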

\begin{proof}
Part (i) follows from $|(G_x \cap G_{x+1})/Q_x|=2$. For part (ii) we use part (i) to see that $\cent{Z_x}{G_x \cap G_{x+1}} = \cent{Z_x}{Z_vQ_x}=\cent{Z_x}{Z_v}$. Since $Z_x \leqslant Q_{v-1} \leqslant G_v$ we have $\cent{Z_x}{Z_v}= Z_x \cap \cent{G_v}{Z_v}$. By Proposition \ref{prop:centraliser of zx}, $Q_v$ is the unique Sylow 2-subgroup of $\cent{G_v}{Z_v}$, so we obtain $\cent{Z_x}{Z_v}= Z_x \cap Q_v$, and by  (i) we have $|Z_x / (Z_x \cap Q_v)| =2$.

 Clearly $Z_{xx+1}\leqslant  \cent{Z_x}{G_x\cap G_{x+1}}$. Since $Z_x\leqslant  G_x\cap G_{x+1}$ we have $\cent{Z_x}{G_x\cap G_{x+1}}=Z_x\cap\zent{G_x\cap G_{x+1}}$.     By Proposition \ref{p:zxcentral},   $Z_x$ is elementary abelian, and so $$Z_x \cap \zent{G_x \cap G_{x+1}} \leqslant \Omega_1(\zent{G_x \cap G_{x+1}}=Z_{xx+1}.$$ 
Part (iv) then follows from (ii) and (iii).
\end{proof}

\begin{lem}
\label{lem:chieffactors}
There is exactly one non-central $G_x$-chief factor in $Z_x$ and $\cent{Z_x}{G_x}=\cent{Z_x}{T}$ for $T\in\syl{3}{G_x}$.
\end{lem}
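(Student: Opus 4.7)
The plan is to decompose $Z_x$ via coprime action of a Sylow $3$-subgroup $T$, and then count composition factors using Proposition~\ref{p:Xmodules}.

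Since $Z_x$ is elementary abelian by Proposition~\ref{p:zxcentral}, the Coprime Action Lemma gives $Z_x = \cent{Z_x}{T} \oplus [Z_x,T]$. Because $Q_x$ centralises $Z_x$, these summands equal $\cent{Z_x}{TQ_x}$ and $[Z_x,TQ_x]$ respectively; since $TQ_x/Q_x = \op{3}{L}$ is characteristic in $L = G_x/Q_x$, the subgroup $TQ_x$ is normal in $G_x$ and the decomposition is $G_x$-invariant. Write $A = \cent{Z_x}{T}$ and $B = [Z_x,T]$. Every $L$-composition factor of $A$ is trivial, since $\op{3}{L}$ acts trivially on $A$ and nontrivial irreducibles have nontrivial $\op{3}{L}$-action by Proposition~\ref{p:Xmodules}. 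For $B$, any $L$-submodule $W$ has $\cent{W}{T} \leq A \cap B = 0$, so coprime action gives $W = [W,T]$; passing to an $L$-chief factor $W/U$ one deduces $[W/U,T] = W/U$ and hence $\cent{W/U}{T} = 0$, so Proposition~\ref{p:Xmodules} forces $W/U$ to be a nontrivial $2$-dimensional irreducible. Thus the non-central $G_x$-chief factors of $Z_x$ are exactly the $L$-chief factors of $B$.

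The main obstacle is to pin down this count. I would first rule out $B = 0$: otherwise $\op{3}{L}$ centralises $Z_x$, and since $G_x = TQ_x \cdot (G_x \cap G_y)$, every $G_x$-conjugate of $Z_{xy}$ equals $Z_{xy}$ (conjugation by $TQ_x$ fixes $Z_{xy}$ elementwise, and conjugation by $G_x \cap G_y$ preserves the characteristic subgroup $Z_{xy}$ of $G_x \cap G_y$). This gives $Z_x = \grp{Z_{xy}^{G_x}} = Z_{xy}$, contradicting Proposition~\ref{prop:szzx/zxcapqv}(iv). Setting $S = G_x \cap G_y$, by Proposition~\ref{prop:szzx/zxcapqv} we have $\cent{Z_x}{S} = Z_{xy}$ of index $2$ in $Z_x$, and the $S$-invariant decomposition yields $|A/\cent{A}{S}| \cdot |B/\cent{B}{S}| = 2$. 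The only normal subgroup of $L$ containing a Sylow $2$-subgroup is $L$ itself, so $S$ acts nontrivially on every nontrivial $L$-irreducible; hence $\cent{B}{S} \neq B$, forcing $\cent{A}{S} = A$ and $|B/\cent{B}{S}| = 2$.

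This gives the second assertion at once: both $S$ and $T$ centralise $A$, so $L = \grp{S/Q_x, \op{3}{L}}$ centralises $A$, whence $\cent{Z_x}{T} = A \leq \cent{Z_x}{G_x}$, and the reverse inclusion is trivial. For the first assertion, each nontrivial $2$-dimensional $L$-irreducible $V$ has $\dim \cent{V}{S} = 1$; iterating the inequality $\dim \cent{W}{S} \leq \dim \cent{U}{S} + \dim \cent{W/U}{S}$ (which follows from the injection of $\cent{W}{S}/\cent{U}{S}$ into $\cent{W/U}{S}$) along a chief series of $B$ with $c$ factors gives $\dim B - \dim \cent{B}{S} \geq c$. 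Since this equals $1$ and $c \geq 1$, we conclude $c = 1$.
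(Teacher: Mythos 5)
Your argument is correct and takes essentially the same route as the paper's: both rest on the coprime decomposition of $Z_x$ under $T\in\syl{3}{G_x}$, on Proposition~\ref{p:Xmodules} to see that every non-central chief factor is $2$-dimensional with fixed space of index $2$ under $G_x\cap G_{x+1}$, and on the index-$2$ bound of Proposition~\ref{prop:szzx/zxcapqv}(ii) to force exactly one such factor. The only cosmetic difference is that the paper refines the series $1\leqslant \cent{Z_x}{G_x}\leqslant \cent{Z_x}{T}\leqslant Z_x$ and invokes the product inequality of \cite[Lemma 2.21]{chrispeter}, whereas you work with the direct sum $\cent{Z_x}{T}\oplus[Z_x,T]$ and count dimensions.
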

\begin{proof}
Let $m$ be the number of non-central $G_x$-chief factors in $Z_x$. Let $T\in \syl{3}{G_x}$.  Since $T$ is transitive on $\Gamma(x)$, Proposition \ref{prop:centraliser of zx} implies  that $[T,Z_x] \neq 1$ and since $T$ acts coprimely on $Z_x$, it follows that $m\geqslant 1$.   Since $TQ_x\vartriangleleft G_x$ (it has index two), we have that $\cent{Z_x}{TQ_x}=\cent{Z_x}{T}$ is $G_x$-invariant. Thus we may take the series $1 \leqslant  \cent{Z_x}{G_x} \leqslant  \cent{Z_x}{T} \leqslant  Z_x$ and refine it to a $G_x$-chief series.  Since the action is coprime $Z_x/\cent{Z_x}{T}$ is completely reducible as a $T$-module and contains no central $T$-chief factors, therefore no central $G_x$-chief factors. Note that by Proposition~\ref{p:Xmodules} each non-central $G_x$-chief factor has order $2^2$. On the other hand, by Proposition \ref{p:zxcentral}, $TQ_x$ centralises $\cent{Z_x}{T}$, and so $\cent{Z_x}{T}/\cent{Z_x}{G_x}$ is just a module for $G_x/TQ_x \cong \mathrm C_{2}$. Hence every $G_x$-chief factor in $\cent{Z_x}{T}$ is central. The following diagram therefore describes the structure of $Z_x$.
\begin{center}
\begin{tikzpicture}[scale=0.8]

\draw (0,0) node (zx) [label=right:${\ \ Z_{x}}$,draw,circle,fill=black,minimum size=2pt,
                            inner sep=0pt] {}
-- ++(-90:2.0cm) node (czxd) [label=right:${\ \ \cent{Z_x}{T}}$,draw,circle,fill=black,minimum size=2pt,
                            inner sep=0pt] {};
\draw (czxd) -- ++(-90:1.0cm) node (czxgx) [label=right:${\ \ \cent{Z_x}{G_x}}$,draw,circle,fill=black,minimum size=2pt,
                            inner sep=0pt] {};
\draw (czxgx) -- ++(-90:1.0cm) node (1) [label=right:${\ \ 1}$,draw,circle,fill=black,minimum size=2pt,
                            inner sep=0pt] {};
\draw [decorate,decoration={brace,amplitude=5pt},xshift=4pt,yshift=0pt] (-0.5,-2.0) -- (-0.5,0.0) node [black,midway,xshift=2.0pt] {$ 2^2/ \cdots / 2^2
\ \ \ \ \ \ \ \ \ 
\ \ \ \ \ \ \ \ \ 
\ \ \ \ \ \ \ \ \ 
$};
\draw [decorate,decoration={brace,amplitude=5pt},xshift=4pt,yshift=0pt] (-0.5,-4.0) -- (-0.5,-2.0) node [black,midway,xshift=2.0pt] {$2/\cdots/2
\ \ \ \ \ \ \ \ \ 
\ \ \ \ \ \ \ \ \ 
\ \ \ \ \ \ \ \ \ $};

\end{tikzpicture}
\end{center}
Set $Z_0=Z_x$ and $Z_m=\cent{Z_x}{T}$ and define the series 
$$Z_m \leqslant Z_{m-1} \leqslant \dots \leqslant Z_1 \leqslant Z_0$$
so that each $Z_i/Z_{i+1}$ is a $G_x$-chief factor.
 For $i\in [0,m)$, set $\ba{Z_i}=Z_i/Z_{i+1}$. Since $G_x \cap G_{x+1}$ is a 2-group, $\cent{\ba{Z_i}}{G_x \cap G_{x+1}} \neq 1$. On the other hand, $G_x \cap G_{x+1}$ cannot centralise $\ba{Z_i}$ since $\cent{G_x}{\ba{Z_i}}$ is a normal subgroup of $G_x$ containing $Q_x$, so $G_x \cap G_{x+1} \leqslant \cent{G_x}{\ba{Z_i}}$ implies that $G_x=\cent{G_x}{\ba{Z_i}}$ which is against $\ba{Z_i}$ being a non-central $G_x$-chief factor. Hence for $0\leqslant i \leqslant m-1$ we have $| \ba{Z_i} /\cent{\ba{Z_i}}{G_x\cap G_{x+1}}|=2$. Now $|Z_x : \cent{Z_x}{G_x \cap G_{x+1}}|=2$ by Proposition \ref{prop:szzx/zxcapqv} (ii). So 
(by \cite[Lemma 2.21]{chrispeter}) we have that 
$$2=|Z_x / \cent{Z_x}{G_x \cap G_{x+1}}| \geqslant  \prod_{i=0}^{m-1} |\ba{Z_i}/\cent{\ba{Z_i}}{G_x \cap G_{x+1}}| = 2^m$$ which gives $m\leqslant 1$. Since $G_x=T(G_x\cap G_{x+1})$ we have 
$$\cent{\cent{Z_x}{T}}{G_x \cap G_{x+1}} = \cent{Z_x}{G_x}.$$ 
Now we see that
$$2 \geqslant   |\ba{Z_0}/\cent{\ba{Z_0}}{G_x \cap G_{x+1}}||\cent{Z_x}{T}/\cent{\cent{Z_x}{T}}{G_x \cap G_{x+1}}| = 2|\cent{Z_x}{T}/\cent{Z_x}{G_x}|$$
which gives the result.
\end{proof}

\begin{lem}
\label{lem:fixedpointsof3subgps}
Let $T\in\syl{3}{G_x}$ and let $T_0 \leqslant T$ such that $ \cent{Z_x}{T} < \cent{Z_x}{T_0}$. Then $[T_0,Z_x]=1$.
\end{lem}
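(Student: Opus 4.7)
The plan is to exploit the very clean chief-factor structure of $Z_x$ established in Lemma~\ref{lem:chieffactors} together with the module dichotomy in Proposition~\ref{p:Xmodules}. The key point is that $Z_x \leqslant \Omega_1(\zent{Q_x})$ by Proposition~\ref{p:zxcentral}, so $Q_x$ centralises $Z_x$ and the action of $G_x$ on $Z_x$ factors through $G_x/Q_x \cong L$, with $T$ mapping onto a Sylow $3$-subgroup of $L$, namely $\op{3}{L}$.

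First I would decompose $Z_x$ by coprime action: since $T$ is a $3$-group acting on the $2$-group $Z_x$, the Coprime Action lemma gives
\[
Z_x \;=\; \cent{Z_x}{T} \times [Z_x,T].
\]
By Lemma~\ref{lem:chieffactors} there is exactly one non-central $G_x$-chief factor in $Z_x$, which is precisely $[Z_x,T] \cong Z_x/\cent{Z_x}{T}$, and by Proposition~\ref{p:Xmodules} this is a $2$-dimensional irreducible $\mathbb F_2 L$-module.

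Next I would feed the hypothesis $\cent{Z_x}{T} < \cent{Z_x}{T_0}$ into the module dichotomy of Proposition~\ref{p:Xmodules}. Pick any $t_0 \in T_0$; its image in $L$ lies in $\op{3}{L}$, so on the non-trivial irreducible module $[Z_x,T]$ the element $t_0$ either satisfies $[t_0,[Z_x,T]]=1$ or acts fixed-point-freely on $[Z_x,T]$. In the fixed-point-free case the only elements of $Z_x$ centralised by $t_0$ are those in the direct summand $\cent{Z_x}{T}$, so $\cent{Z_x}{t_0} = \cent{Z_x}{T}$, and hence $\cent{Z_x}{T_0} \leqslant \cent{Z_x}{t_0} = \cent{Z_x}{T}$, contradicting the hypothesis. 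Therefore every $t_0 \in T_0$ must centralise $[Z_x,T]$.

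Finally, since each element of $T_0$ centralises $[Z_x,T]$ and, being in $T$, also centralises $\cent{Z_x}{T}$ by definition, it centralises the whole decomposition $Z_x = \cent{Z_x}{T} \times [Z_x,T]$, giving $[T_0,Z_x]=1$. The only genuine content is verifying that the action of $T_0$ really does factor through $L$ so that Proposition~\ref{p:Xmodules} is applicable, which is immediate from $Z_x \leqslant \zent{Q_x}$; after that the argument is a one-line application of the dichotomy.
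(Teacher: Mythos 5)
Your proof is correct. The two points that need care both check out: $[Z_x,T]=[Z_x,TQ_x]$ is $G_x$-invariant (since $TQ_x\trianglelefteq G_x$), so it really is an $\mathbb F_2L$-module, $G_x$-isomorphic to the unique non-central chief factor $Z_x/\cent{Z_x}{T}$ of Lemma~\ref{lem:chieffactors}; and the decomposition $Z_x=\cent{Z_x}{T}\times[Z_x,T]$ is preserved by each $t_0\in T_0$, so a fixed-point-free action on the second factor does force $\cent{Z_x}{t_0}=\cent{Z_x}{T}$, contradicting the hypothesis. Your route is genuinely different from the paper's, though it draws on the same three inputs. The paper never decomposes $Z_x$: it sets $W=\cent{Z_x}{T_0}$, notes that $|Z_x/\cent{Z_x}{T}|=2^2$ forces $W=Z_x$ or $|Z_x:W|=2$, deduces that the $3$-group $T_0$ must centralise $Z_x/W$ so that $[Z_x,T_0]\leqslant W=\cent{Z_x}{T_0}$, and finishes with part (a) of Coprime Action, $[Z_x,T_0]=[Z_x,T_0,T_0]\leqslant[\cent{Z_x}{T_0},T_0]=1$. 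You instead use part (b) of Coprime Action together with the trivial-or-fixed-point-free dichotomy in Proposition~\ref{p:Xmodules}. The paper's argument is slightly leaner: it needs only the numerical consequence of Proposition~\ref{p:Xmodules} (the non-central factor has order $4$) and avoids verifying $G_x$-invariance of $[Z_x,T]$. Yours makes the module structure explicit and does not actually use the dimension of the non-central factor, only its irreducibility, so it would survive in a setting where that factor were larger; the price is the extra bookkeeping about invariance of the direct decomposition.
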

\begin{proof}
Set $W=\cent{Z_x}{T_0}$ and suppose that $ \cent{Z_x}{T} < W$. Now $\cent{Z_x}{T}< W \leqslant  Z_x$. Since $|Z_x/\cent{Z_x}{T}|=2^2$, by Lemma \ref{lem:chieffactors} and Proposition \ref{p:Xmodules}, we see that either $W=Z_x$ (and we are done) or  $|Z_x:W|=2$.  Since $T_0$ acts on $Z_x/W$ it must centralise $Z_x/W$, which implies $[Z_x,T_0] \leqslant W=\cent{Z_x}{T_0}$. Now coprime action gives 
$$[Z_x,T_0]=[Z_x,T_0,T_0] \leqslant [\cent{Z_x}{T_0},T_0] =1.$$
\end{proof}

\begin{rem}
\label{rem:remark}
Let $r$ be a vertex of $\Gamma$. For each $t \in \Gamma(r)$ we may identify $\Gamma(r)$ with $V$ so that $t$ is identified with the zero vector.  For $s \in \Gamma(r)$ distinct from $t$, we
 see that $\grp{G_r \cap G_t, G_r \cap G_s}/Q_r$ is generated by two involutions, and so is isomorphic to $\sym{3}$. Therefore, by Proposition~\ref{genofx}, for every basis $\mathcal B$  of $V$ we have
 $$\grp{G_r \cap G_t, G_r \cap G_s \mid s \in \mathcal B} = G_r.$$ 
\end{rem}

%


For $(r,s) \in \mathcal C$ with the (unique) path between $r$ and $s$ being $(r,r+1,\dots,s-1,s)$ we set
\begin{eqnarray*}
\mathcal C_{-1}(r,s) & = & \{ y \in \Gamma(r) \setminus \{r+1\} \mid (y,s-1) \in \mathcal C\}, \\
\mathcal C_{-1}^*(r,s) & = &\{ y \in \Gamma(r) \setminus \{r+1\} \mid (y,s-1) \notin \mathcal C\}, \end{eqnarray*}
and note that $\Gamma(r) = \{r+1\} \cup \mathcal C_{-1}(r,s) \cup \mathcal C_{-1}^*(r,s)$. The composition of the defined sets plays a prominent role in the proof of the next theorem.

\begin{thm}
\label{b <3}
We have $b\leqslant 2$.
\end{thm}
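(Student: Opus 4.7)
My plan is to argue by contradiction: assume $b \geqslant 3$, fix the critical pair $(x,v)$ with path $(x, x+1, \ldots, v-1, v)$, and derive a contradiction to the chief-factor structure of $Z_x$. By minimality of $b$ we have $Z_x \leqslant Q_{v-1}$, while $Z_x \not\leqslant Q_v$; by the preceding proposition the companion pair $(v,x)$ also lies in $\mathcal C$, so the picture is symmetric at the two ends.

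The opening move is the commutator observation $[Z_x, Z_v] \neq 1$: if $[Z_x, Z_v] = 1$ then the elementary abelian $2$-group $Z_x$ would lie inside $\cent{G_v}{Z_v}$, whose unique Sylow $2$-subgroup is $Q_v$ by Proposition~\ref{prop:centraliser of zx}, contradicting $Z_x \not\leqslant Q_v$. Since $Z_x \leqslant Q_{v-1} \leqslant G_v$ normalises $Z_v \vartriangleleft G_v$ (and symmetrically $Z_v \leqslant Q_{x+1} \leqslant G_x$ normalises $Z_x$), we obtain the key constraint $1 \neq [Z_x, Z_v] \leqslant Z_x \cap Z_v$.

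Next I would analyse the partition $\Gamma(x) = \{x+1\} \cup \mathcal{C}_{-1}(x,v) \cup \mathcal{C}_{-1}^*(x,v)$ together with its counterpart at $v$. For each $y' \in \mathcal{C}_{-1}^*(x,v)$, the defining relation $Z_{y'} \leqslant Q_{v-1}$---combined with arc-transitivity, which lets us view $Z_{y'}$ as a $G$-conjugate of $Z_x$ sitting inside $Z(G_x \cap G_{y'})$---constrains how the $2$-group $G_x \cap G_{y'}$ meshes with $Z_x$ along the truncated path to $v-1$. The goal is to extract a proper subgroup $T_0$ of a Sylow $3$-subgroup $T \leqslant G_x$ satisfying $\cent{Z_x}{T} < \cent{Z_x}{T_0}$, whereupon Lemma~\ref{lem:fixedpointsof3subgps} delivers $[T_0, Z_x] = 1$. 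Then, using Remark~\ref{rem:remark} to generate $G_x$ from $G_x \cap G_{x+1}$ together with the stabilisers $G_x \cap G_{y'}$ as $y'$ runs over a basis of neighbours, one inflates the collection of elements acting trivially on $Z_x$ enough to contradict the existence of the unique non-central $G_x$-chief factor in $Z_x$ supplied by Lemma~\ref{lem:chieffactors}.

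The principal obstacle is manufacturing $T_0$: this requires a careful case split on how $\Gamma(x) \setminus \{x+1\}$ distributes between $\mathcal{C}_{-1}(x,v)$ and $\mathcal{C}_{-1}^*(x,v)$, paired with the mirror analysis at $v$ provided by $(v,x) \in \mathcal C$ and the fact that the image $Z_x Q_v/Q_v$ acts on $\Gamma(v)$ as the Frobenius complement at $v-1$, pairing up the remaining neighbours. The dimension-$2$ classification of non-trivial irreducible $\mathbb F_2 L$-modules in Proposition~\ref{p:Xmodules}, together with the explicit Frobenius structure of $L$ encoded in Proposition~\ref{genofx} and the index-$2$ data recorded in Proposition~\ref{prop:szzx/zxcapqv}, should be the tools that eliminate the mixed configurations and close out the contradiction.
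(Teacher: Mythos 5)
Your opening observations ($1\neq[Z_x,Z_v]\leqslant Z_x\cap Z_v$, the partition of $\Gamma(x)$, the plan to feed a subgroup $T_0$ into Lemma~\ref{lem:fixedpointsof3subgps} and then invoke Remark~\ref{rem:remark}) all match the paper's strategy, but the proposal stops exactly where the real work begins, and the endgame you describe does not close the argument. Two concrete gaps. First, you never construct $T_0$. The paper's mechanism is: show that $\mathcal C_{-1}^*(x,v)$ cannot contain a basis (for $t\in\mathcal C_{-1}^*(x,v)$ one gets $Z_tZ_x=Z_xZ_{x+1}$, so a basis inside $\mathcal C_{-1}^*(x,v)$ would make the nontrivial group $Z_xZ_{x+1}$ normal in $\grp{G_x,G_e}$, against Lemma~\ref{lem:fundlem}); hence $\mathcal C_{-1}(x,v)$ contains a basis $\Pi_{x,v}$ consisting of \emph{critical} neighbours, and for each $\alpha\in\Pi_{x,v}$ the nontrivial commutator $R_\alpha=[Z_\alpha,Z_{v-1}]$ lies in $Z_x$ and is centralised by a $3$-element $D_\alpha$ of $\grp{G_x\cap G_{x+1},G_x\cap G_\alpha}$; when $R_\alpha\not\leqslant\cent{Z_x}{G_x}$ this is what certifies $\cent{Z_x}{T}<\cent{Z_x}{D_\alpha}$ and triggers Lemma~\ref{lem:fixedpointsof3subgps}. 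You instead focus on the vertices in $\mathcal C_{-1}^*(x,v)$ and defer the construction to ``a careful case split,'' which is precisely the content of the theorem.

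Second, and more seriously, your intended final contradiction --- a transitive portion of $G_x$ centralising $Z_x$, against Lemma~\ref{lem:chieffactors} and Proposition~\ref{prop:centraliser of zx} --- only materialises in the case where \emph{every} $R_\alpha$ avoids $\cent{Z_x}{G_x}$. Nothing prevents some $R_\alpha$ from landing inside $\cent{Z_x}{G_x}$, which may well be nontrivial; in that branch no $3$-element is forced to centralise the non-central chief factor and your contradiction evaporates. The paper handles this residual case by iterating the construction one step further out (producing $\sigma$ adjacent to $\alpha$ with $1\neq R_\sigma=[Z_\sigma,Z_{v-2}]\leqslant\cent{Z_\alpha}{G_\alpha}$, and using $b>2$ to get $[Z_v,R_\sigma]=1$ so that the same dichotomy can be run again at $x$) and concluding with a \emph{different} contradiction: a nontrivial subgroup inside $\cent{Z_\alpha}{G_\alpha}\cap\cent{Z_x}{G_x}$ for adjacent $\alpha$ and $x$, which Lemma~\ref{lem:fundlem} forbids. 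Your proposal contains no mechanism for this branch, so as it stands the argument is incomplete.
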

\begin{proof}
We assume for a contradiction that $b>2$. Figure~\ref{figure} will be helpful in recognising various inclusions of subgroups throughout the proof.

The assumption implies that $Z_x \leqslant Q_{x+1}$, so in particular, $Z_x$ normalises $Z_{x+1}$ (and vice versa) and $Z_xZ_{x+1}$ is a subgroup of $G_x \cap G_{x+1}$ normalised by $G_e$.

\begin{claim}
$Z_v$ fixes setwise $\mathcal C_{-1}(x,v)$ and $\mathcal C_{-1}^*(x,v)$.
\end{claim}

Since $Z_v \leqslant G_x \cap G_{x+1}$ and $Z_v \not\leqslant Q_x$, certainly $Z_v$ induces an involution on the set $\Gamma(x) \setminus \{x+1\}$. Now since $Z_v \leqslant Q_v \leqslant G_v \cap G_{v-1}$ normalises $Q_{v-1}$, for any $a\in Z_v$ and $t\in \Gamma(x) \setminus \{x+1\}$ we have that $Z_t \leqslant Q_{v-1}$ if and only if $(Z_t)^a=Z_{t^a}\leqslant Q_{v-1}$, that is, $t \in \mathcal C_{-1}^*(x,v)$ if and only if $t^a \in \mathcal C_{-1}^*(x,v)$.\\ \\

\begin{claim} Suppose $t\in \mathcal C_{-1}^*(x,v)$. Then $Z_tZ_x = Z_xZ_{x+1}$.
\end{claim}

We have $Z_{t} \leqslant Q_{v-1} \leqslant G_{v-1} \cap G_v = Z_x Q_v \leqslant Z_x \cent{G_v}{Z_v}$ by Propositions \ref{prop:szzx/zxcapqv}(i) and \ref{prop:centraliser of zx}. Thus  $[Z_{t},Z_v] \leqslant [Z_x,Z_v]$
and since $Z_v\leqslant  G_x$ it follows that $[Z_x,Z_v] \leqslant Z_x \leqslant Z_tZ_x$, so $Z_v$ normalises $Z_{t}Z_x$. Hence $Z_v Q_x = G_x \cap G_{x+1}$ (by Proposition \ref{prop:szzx/zxcapqv}(i)) normalises $Z_{t} Z_x$ and so does $G_{t} \cap G_x$. Since 
$$ H:= \grp{ G_{t} \cap G_x , G_x \cap G_{x+1} }$$
normalises $Z_{t}Z_x$ and induces a $\sym{3}$ on $\Gamma(x)$ which contains an element $h$ interchanging $t$ and $x+1$, we have $Z_{t}Z_x = (Z_{t}Z_x)^h = Z_{x+1}Z_x$.\\ \\


For the next two claims we use Remark~\ref{rem:remark}, identifying $\Gamma(x)$ with $V$ so that $x+1$ is identified with the zero vector.

\begin{claim}
\label{enough crit pairs}
There are no bases contained in $  \mathcal C_{-1}^*(x,v)$.
\end{claim}

Suppose that $\mathcal B$ is a basis of $V$ contained in $\mathcal C_{-1}^*(x,v)$. We have  $$G_x = \grp{G_x \cap G_{x+1}, G_r \cap G_x \mid r \in \mathcal B}.$$ For each $r \in \mathcal B$ the  previous claim implies
$Z_{x+1}Z_x = Z_x Z_r$
and so  $1 \neq Z_{x+1}Z_x $ is normalised by $\grp{G_x,G_e}$, a contradiction to Lemma~\ref{lem:fundlem}.\\ \\

\begin{claim}
\label{wherecritpairs}
$\mathcal C_{-1}(x,v)$ contains a basis.
\end{claim}

Let $W$ be the subspace of $V$ generated by the vectors in $\mathcal C_{-1}^*(x,v)$. The previous claim shows that $W \neq V$, so  $V$ has a basis $\mathcal B$ such that $\mathcal B \cap W = \varnothing$. Therefore $\mathcal B \cap \mathcal C_{-1}^*(x,v)=\varnothing$ and since $\mathcal B \subseteq V^\#$ we have that $\mathcal B \subseteq \mathcal C_{-1}(x,v)$ as required.\\ \\

For $(x,v) \in \mathcal C$ we write $\Pi_{x,v}$ for the basis delivered by \ref{wherecritpairs}. For $\alpha \in \Pi_{x,v}$ set 
 $$R_\alpha := [Z_\alpha,Z_{v-1}].$$ 
 Since $Q_\alpha$ is the unique Sylow 2-subgroup of $\cent{G_\alpha}{Z_\alpha}$, by Proposition \ref{prop:centraliser of zx}, and $Z_{v-1}\not\leqslant  Q_\alpha$, as $(\alpha,v-1)$ is a critical pair, we have that $R_\alpha\neq 1$. Note that $R_\alpha \leq Z_\alpha \cap Z_{v-1}$ and is therefore centralised by $Q_\alpha$ and $Q_{v-1}$. 
  The figure below indicates   the position of the vertices we have defined.

\begin{center}
\begin{tikzpicture}[scale=0.8]

\draw (0,0) node (x) [label=above:${\ \ x}$,draw,circle,fill=black,minimum size=2pt,
                            inner sep=0pt] {}
-- ++(1,0) node (y) [label=below:${x+1}$,draw,circle,fill=black,minimum size=2pt,
                            inner sep=0pt] {};
\draw [dashed] (y) -- ++(5,0) node (v-1) [label=above:${v-1}$,draw,circle,fill=black,minimum size=2pt,
                            inner sep=0pt] {};
\draw (v-1) -- ++(1,0) node (v) [label=below:${v}$,draw,circle,fill=black,minimum size=2pt, inner sep=0pt] {};
\draw (x) -- ++(135:1) node (x-1) [label=above:${\alpha}$,draw,circle,fill=black,minimum size=2pt,inner sep=0pt] {};

\end{tikzpicture}
\end{center}

\begin{claim}
For $\alpha \in \Pi_{x,v}$ we have $R_\alpha \leqslant Z_x$.
\end{claim}

Since $(\alpha,v-1)$ is a critical pair, Proposition \ref{prop:szzx/zxcapqv}(i) yields $G_\alpha \cap G_x = Z_{v-1} Q_{\alpha}$ so 
$$R_\alpha \leqslant \cent{Z_{\alpha}}{G_{\alpha} \cap G_x} = Z_{x \alpha} \leqslant Z_x$$
by Proposition \ref{prop:szzx/zxcapqv}(iii).\\ \\ 

\begin{claim}
\label{commutators in centres}
There exists $\alpha \in \Pi_{x,v}$ such that $R_\alpha \leqslant \cent{Z_x}{G_x}$.
\end{claim}

Suppose this is not the case and that for each $\alpha \in \Pi_{x,v}$ we have $R_\alpha \not\leqslant \cent{Z_x}{G_x}$. Since $(\alpha,v-1)$ is a critical pair, we have that $G_{\alpha} \cap G_x = Q_{\alpha} Z_{v-1}$ centralises $R_\alpha$ (since $R_{\alpha} \leqslant Z_\alpha \cap Z_{v-1}$). Since $b>1$ we have that $Z_v \leqslant Q_{v-1}\leqslant  \cent{G_{v-1}}{Z_{v-1}}$,  by Proposition \ref{prop:centraliser of zx}. Then as $R_\alpha\leqslant  Z_{v-1}$ it follows that $R_\alpha$ is centralised by $Z_v$, and therefore by $Z_vQ_x=G_x \cap G_{x+1}$ (since \ref{commutators in centres} shows $R_\alpha \leqslant Z_x$). Thus $R_\alpha$ is centralised by $C_\alpha:=\grp{G_x \cap G_{x+1},G_x \cap G_\alpha}$.

Pick $D_\alpha \in \syl{3}{C_\alpha}$ (so $D_\alpha \cong \mathrm C_{3}$). Since $n\geqslant 2$, Lemma \ref{lem:fixedpointsof3subgps} implies that $[Q_x D_\alpha,Z_x]=1$. Now $C:=\grp{Q_x D_\alpha \mid \alpha \in \Pi_{x,v}}$ centralises $Z_x$, and by our definition of $\Pi_{x,v}$ we see $C$ contains a Sylow 3-subgroup of $G_x$, and so transitive on $\Gamma(x)$, contradicting Proposition \ref{prop:centraliser of zx}. \\ \\

For each  $\alpha \in \Pi_{x,v}$  we apply \ref{wherecritpairs} to $\mathcal C_{-1}(\alpha,v-1)$ which gives $\Pi_{\alpha,v-1}$. Then we apply \ref{commutators in centres} to $\Pi_{\alpha,v-1}$ which yields $\sigma \in \mathcal C_{-1}(\alpha,v-1)$ such that 
$$1\neq R_\sigma = [Z_\sigma,Z_{v-2}] \leqslant \cent{Z_\alpha}{G_\alpha}$$
and we let $\Sigma$ be the set of vertices obtained. We indicated below where the vertices in $\Sigma$ lie.

\begin{center}
\begin{tikzpicture}[scale=0.8]

\draw (0,0) node (x) [label=above:${\ \ x}$,draw,circle,fill=black,minimum size=2pt,
                            inner sep=0pt] {}
-- ++(1,0) node (y) [label=below:${x+1}$,draw,circle,fill=black,minimum size=2pt,
                            inner sep=0pt] {};
\draw [dashed] (y) -- ++(5,0) node (v-2) [label=above:${v-2}$,draw,circle,fill=black,minimum size=2pt,
                            inner sep=0pt] {};
\draw (v-2) -- ++(1,0) node (v-1) [label=below:${v-1}$,draw,circle,fill=black,minimum size=2pt, inner sep=0pt] {};
\draw (v-1) -- ++(1,0) node (v) [label=above:${v}$,draw,circle,fill=black,minimum size=2pt, inner sep=0pt] {};
\draw (x) -- ++(135:1) node (x-1) [label=above:${\alpha}$,draw,circle,fill=black,minimum size=2pt,inner sep=0pt] {};

\draw (x-1) -- ++(180:1) node (s) [label=left:${\sigma}$,draw,circle,fill=black,minimum size=2pt,inner sep=0pt] {};
\end{tikzpicture}
\end{center}

\begin{claim}\label{claim:rsig in zx} For all $\sigma \in \Sigma$ we have $R_\sigma  \leqslant Z_x$.\end{claim}

We have that $R_\sigma \leqslant \cent{Z_\sigma}{Q_\sigma Z_{v-2}}=Z_\sigma \cap Z_\alpha$, by Proposition \ref{prop:szzx/zxcapqv}(i) and (iii). Now since $b>1$ we have that $Z_{v-1} \leqslant Q_{v-2}$ and so $[Z_{v-1},R_\sigma] \leqslant [Q_{v-2},Z_{v-2}]=1$, by Proposition \ref{prop:centraliser of zx}. Hence $R_\sigma \leqslant \cent{Z_\alpha}{Q_\alpha Z_{v-1} }= \cent{Z_\alpha}{G_\alpha \cap G_x}= Z_\alpha \cap Z_x$, by Proposition \ref{prop:szzx/zxcapqv}(i) and (iii).\\ \\

\begin{claim} For all $\sigma \in \Sigma$ we have $[Z_v,R_\sigma]=1$. \end{claim}

The assumption $b>2$ implies that $Z_v \leqslant Q_{v-2}$, and therefore $Z_v$ centralises $Z_{v-2}$ which contains $R_\sigma$.\\ \\ 

\begin{claim} \label{claim: sig cent} There is some $\sigma\in \Sigma$ such that $R_\sigma$ is centralised by $G_x$.\end{claim}

Assume the claim is false. Then for all $\sigma \in \Sigma$ by \ref{claim:rsig in zx} we have $R_\sigma \not\leqslant \cent{Z_x}{G_x}$. Let $\sigma \in \Sigma$ be arbitrary.  By \ref{claim:rsig in zx} and Proposition \ref{prop:centraliser of zx},
we see that $R_\sigma$ is centralised by $Q_x$. 
Using \ref{claim: sig cent} we see that $R_\sigma$ is centralised by $C_\sigma:=\grp{Q_xZ_v,G_x \cap G_{\alpha}}$. Pick $D_\sigma \in \syl{3}{C_\sigma}$. Then $|D_\sigma|=3$ and since $n\geqslant 2$, Lemma \ref{lem:fixedpointsof3subgps} implies $[D_\sigma,Z_x]=1$. Now $D:=\grp{Q_xD_\sigma \mid \sigma \in \Sigma}$ centralises $Z_x$ and by the definition of $\Pi_{x,v}$ we see that $D$ contains a Sylow 3-subgroup of $G_x$. Therefore $D$ is a transitive subgroup of $G_x$,  a contradiction to Proposition \ref{prop:centraliser of zx}. \\ \\

\begin{claim} A contradiction. \end{claim}

By \ref{claim: sig cent} there is $\sigma \in \Sigma$ and $\alpha \in \Pi_{x,v}$ such that $1 \neq R_\sigma \leqslant \cent{Z_\alpha}{G_\alpha} \cap \cent{Z_x}{G_x}$. However,  $\cent{Z_\alpha}{G_\alpha} \cap \cent{Z_x}{G_x}$ is centralised by $\grp{G_{\alpha},G_x}$ and contained in $G_{\alpha} \cap G_x$,  so Lemma~\ref{lem:fundlem} yields
$$\cent{Z_\alpha}{G_\alpha} \cap \cent{Z_x}{G_x}= 1,$$ a contradiction which completes the proof.
\end{proof}

Theorem~\ref{mainthm} now follows from Theorem~\ref{b <3} and the lemma below.

\begin{lem}
\label{lem:boundonbisgood}
The group $\vst{x}{b+2}$ is trivial for all $x\in \Gamma$.
\end{lem}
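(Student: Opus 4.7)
The strategy is to show by contradiction that $K := \vst{x}{b+2}$ is trivial. This suffices for Theorem~\ref{mainthm}, since by Theorem~\ref{b <3} we have $b \leq 2$, so $G_x$ embeds faithfully in the symmetric group on the finite ball $B(x, b+2)$ of size at most $1 + 3^n + (3^n)^2 + (3^n)^3 + (3^n)^4$, giving a universal bound on $|G_x|$ depending only on $n$.

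\textbf{Step 1 ($K$ is a $2$-group).} Suppose $K \neq 1$. For any vertex $y$ with $d(x,y) = d \leq b+2$, the triangle inequality gives $K \leqslant G_y^{[b+2-d]}$; in particular $K \leqslant \vst{y}{1} = Q_y$ whenever $d(x,y) \leqslant b+1$. For any edge $\{y,z\}$ inside $B(x,b+1)$ we then have $K \leqslant \vst{yz}{1}$. The Thompson--Wielandt Theorem for locally semiprimitive pairs (due to Spiga) says $\vst{yz}{1}$ is a $p$-group, and Lemma~\ref{lem:isom of ge/gxy1 and fact of gxy} forces $p=2$ since $G_e$ is a $2$-group. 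Hence $K$ is a non-trivial $2$-group, normal in $G_x$.

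\textbf{Step 2 (Centralization of local centres).} By Proposition~\ref{p:zxcentral}, $Z_y \leqslant \Omega_1(\zent{Q_y})$ for every vertex $y$. Combined with $K \leqslant Q_y$ from Step~1, we obtain $[K,Z_y] = 1$ for all $y$ with $d(x,y) \leqslant b+1$. Applying this to the endpoint of a critical pair $(x,v) \in \mathcal{C}$ shows that $Z_v \leqslant \cent{G}{K}$. By Proposition~\ref{prop:szzx/zxcapqv}(i), $Z_v$ together with $Q_x$ fills out $G_x \cap G_{x+1}$, and more generally, varying over critical pairs provides a substantial amount of centralizing data for $K$.

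\textbf{Step 3 (Inductive extension of fixed set).} The heart of the argument is to show $K \leqslant \vst{x}{b+3}$, which by iteration and connectedness of $\Gamma$ forces $K=1$. For this, take any $u$ with $d(x,u)=b+2$ and let $w$ be the neighbour of $u$ with $d(x,w) = b+1$. Then $K$ fixes both $u$ and $w$, so $K^{\Gamma(u)}$ lies in the stabiliser of $w$ inside $G_u^{\Gamma(u)} \cong L$, which is a Frobenius complement of order $2$. Thus $|K^{\Gamma(u)}| \leqslant 2$, and the task reduces to ruling out the non-trivial case.

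\textbf{Step 4 (Ruling out the order-two action).} Suppose $K^{\Gamma(u)}$ has order $2$; then some $k \in K$ induces the involution of the Frobenius complement on $\Gamma(u)$, pairing up $\Gamma(u) \setminus\{w\}$. I would use the centralizations from Step~2, together with the critical pair $(w',u)$ obtained by extending the path past $u$, to show this is incompatible with $[K, Z_{w'}] = 1$: specifically, $Z_{w'}$ acts non-trivially on $\Gamma(u)$ and the induced image of $k$ on $\Gamma(u)$ would fail to commute with $Z_{w'}^{\Gamma(u)}$ unless $k$ acts trivially, yielding the contradiction $K^{\Gamma(u)} = 1$.

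\textbf{Main obstacle.} The delicate step is Step~4; the analogous Weiss-style arguments for the locally primitive case use that any non-trivial normal subgroup of the local action is transitive, but here our local action is only semiprimitive, so one must replace this with the block structure of $L$ and the Frobenius complement being of order $2$. This is precisely where the structural information about the Frobenius group $L$ and the machinery developed for Theorem~\ref{b <3} (especially the generation results of Proposition~\ref{genofx} and Remark~\ref{rem:remark}) must be invoked.
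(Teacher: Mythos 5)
Your overall strategy (extend the pointwise stabiliser of a ball outward one step at a time and conclude by connectedness) is the classical Weiss-style route, and Steps 1--3 are sound (Step 1 is in fact superfluous: Lemma~\ref{lem:isom of ge/gxy1 and fact of gxy} already makes $G_e$, hence $\vst{yz}{1}$, a $2$-group without invoking Thompson--Wielandt). The genuine gap is in Step 4. To kill the putative involution in $K^{\Gamma(u)}$ you need a subgroup $Z_{w'}$ that simultaneously (i) centralises $K$, which via Step 2 requires $\mathrm d(x,w')\leqslant b+1$ so that $K\leqslant Q_{w'}$, and (ii) induces a non-trivial involution on $\Gamma(u)$ fixing a neighbour of $u$ other than $w$, which requires $Z_{w'}\not\leqslant Q_u$ with the path from $w'$ reaching $u$ from a direction other than $w$. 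Vertex-transitivity guarantees that some critical pair $(v',u)$ exists, but gives no control over where $v'$ sits relative to $x$: if the path from $v'$ to $u$ leaves $u$ away from $x$, then $\mathrm d(x,v')=2b+2>b+1$ and (i) fails; and the vertex you actually name (``extending the path past $u$'') is not known to form a critical pair with $u$ at all, since not every pair at distance $b$ is critical. Without both (i) and (ii) holding for the same $w'$, the non-commuting-involutions contradiction never gets off the ground.

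The paper sidesteps this positioning problem by running the argument at the far end $v$ of the given critical pair $(x,v)$, rather than on a sphere around $x$: for every $w\in\Gamma(v)\setminus\{v-1\}$ one has $\mathrm d(x,w)=b+1$, so $\vst{w}{b+2}\leqslant Q_x$ and hence $Z_x$ centralises $\vst{w}{b+2}$ by Proposition~\ref{p:zxcentral}. Since $Z_x$ acts on $\Gamma(v)$ as $w\mapsto -w$, conjugation gives $\vst{w}{b+2}=\vst{-w}{b+2}$, and the $2$-transitivity of the block stabiliser on $\{\underline 0,w,-w\}$ upgrades this to $\vst{w}{b+2}=\vst{\underline 0}{b+2}=\vst{v-1}{b+2}$ for all $w$. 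That subgroup is therefore normalised by $\grp{G_v,G_{v-1}}$ and dies by Lemma~\ref{lem:fundlem}(a); vertex-transitivity finishes. In other words, the centralising involution must be supplied by the critical pair you started with, acting on the neighbourhood of its own endpoint, not by a critical pair you hope to find near an arbitrary vertex $u$ of the sphere of radius $b+2$; to salvage your version you would essentially have to reorganise it into this form.
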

\begin{proof}
Let $x$ be an arbitrary vertex of $\Gamma$ and choose $v$ so that $(x,v) \in \mathcal C$. As in Remark~\ref{rem:remark} we identify the set $\Gamma(v)$ with $V$ so that $v-1$ is identified with {\underline 0}. For each $w\in V^\#$  we have $\vst{w}{b+2} \leqslant \vstx{1}=Q_x$, so by Proposition \ref{prop:centraliser of zx}, $[Z_x,\vst{w}{b+2}]=1$. Now $(x,v) \in \mathcal C$ implies that $Z_x$ induces an involution on $\Gamma(v)$ (which fixes {\underline 0}). We have therefore that the action of $Z_x$ on $V^\#$ is the map
$$w \mapsto -w.$$
Since $Z_x$ centralises $\vst{w}{b+2}$, for $w\in V^\#$ we have 
$$\vst{w}{b+2}=\vst{-w}{b+2}.$$
For $w\in V^\#$ the set $ B_w:=\{{\underline 0},w,-w\}$ is a block for the action of $G_v$ on $\Gamma(v)$ and $G_{ B_w}$ is 2-transitive on $ B_w$. This implies that 
$$\vst{w}{b+2}=\vst{-w}{b+2}=\vst{{\underline 0}}{b+2}.$$
 Since this is true for all $w\in V^\#$, $\vst{{\underline 0}}{b+2}$ is normalised by $G_v$. Now we see that 
$$\vst{{\underline 0}}{b+2} \vartriangleleft \grp{G_v,G_{{\underline 0}}}$$
hence $\vst{{\underline 0}}{b+2}=1$ by Lemma~\ref{lem:fundlem}(a).
\end{proof}

\begin{rem}
The proof shows that $G_x^{[4]}=1$, we would like to know if there exist examples with $G_x^{[3]} \neq 1$. The largest examples of which the authors are aware have $|G_x|=144$, $|G_x^{[1]}| =8$, $|G_x^{[2]}|=2$ and $|G_x^{[3]}|=1$.
\end{rem}


\begin{thebibliography}{99}

\bibitem{BM}
\'A. Bereczky, A. Mar\'oti, On groups with every normal subgroup transitive or semiregular, J. Algebra 319 (2008) 1733--1751.

\bibitem{stelldelgado}
A. Delgado, D. Goldschmidt, B. Stellmacher, Groups and graphs: new results and methods. DMV Seminar, 6. Birkh\"auser Verlag, Basel, 1985.

\bibitem{gard}
A. Gardiner,  Arc-transitivity in graphs, Quart. J. Math. Oxford 24 (1973) 399--407.

\bibitem{gold}
D. M. Goldschmidt, Automorphisms of Trivalent Graphs,  Ann. of Math. (2) 111 (1980), no. 2, 377--407.

\bibitem{kurzweilstellmacher}
H. Kurzweil,  B. Stellmacher, The theory of finite groups. 
An introduction.  Universitext. Springer-Verlag, New York, 2004.


\bibitem{chrispeter}
C. W. Parker, P. Rowley, Symplectic amalgams. Springer Monographs in Mathematics. Springer-Verlag London, Ltd., London, 2002. 

\bibitem{praeger}
C.E. Praeger, Finite quasiprimitive group actions on graphs and designs, in: Young Gheel Baik, David L. Johnson, Ann Chi
Kim (Eds.), Groups -- Korea, de Gruyter, Berlin, New York, 2000, pp. 319--331.

\bibitem{PPSS}
C. E. Praeger, L. Pyber, P. Spiga, E. Szab{\'o}, Graphs with automorphism groups admitting composition factors of bounded rank. Proc. Amer. Math. Soc. 140 (2012), no. 7, 2307--2318.

\bibitem{PrSV}
C. E. Praeger,  P. Spiga, G. Verret, Bounding the size of a vertex-stabiliser in a finite vertex-transitive graph. J. Combin. Theory Ser. B 102 (2012), no. 3, 797--819.

\bibitem{PSV}
P. Poto\v{c}nik, P. Spiga, G. Verret, On graph-restrictive permutation groups. J. Combin. Theory Ser. B 102 (2012), no. 3, 820--831.


\bibitem{sami}
A. Q. Sami, Locally dihedral amalgams of odd type. J. Algebra 298 (2006) 630--644.

\bibitem{serre}
J-P. Serre, Trees. Translated from the French original by John Stillwell.  Springer Monographs in Mathematics. Springer-Verlag, Berlin, 2003.

\bibitem{spigath-w}
P. Spiga, Two local conditions on the vertex stabiliser of arc-transitive graphs and their effect on the Sylow subgroups. J. Group Theory 15 (2012), no. 1, 23--35.

\bibitem{pabloverret}
P. Spiga, G. Verret, On intransitive graph-restrictive groups. to appear in J. Algebraic Combin.  doi:10.1007/s10801-013-0482-5.



\bibitem{trof1}
V. I. Trofimov, Stabilizers of the vertices of graphs with projective suborbits. Soviet Math. Dokl., 42 (1991), 825--828.

\bibitem{trof2}
V. I. Trofimov, Graphs with projective suborbits, Russian Acad. Sci. Izv. Math., 39 (1992), 869--894.

\bibitem{trof3}
V. I. Trofimov, Graphs with projective suborbits. Cases of small characteristics. I. Russian Acad. Sci. Izv. Math., 45 (1995), 353--398. 

\bibitem{trof4}
V. I. Trofimov, Graphs with projective suborbits. Cases of small characteristics. II. Russian Acad. Sci. Izv. Math., 45 (1995), 559--576.

\bibitem{trof5}
V. I. Trofimov, Graphs with projective suborbits. Exceptional cases of characteristic 2. I. Izv. Math., 62 (1998), 1221--1279.

\bibitem{trof6}
V. I. Trofimov, Graphs with projective suborbits. Exceptional cases of characteristic 2. II. Izv. Math., 64 (2000), 173--192.

\bibitem{trof7}
V. I. Trofimov, Graphs with projective suborbits. Exceptional cases of characteristic 2. III. Izv. Math., 65 (2001), 787--822.

\bibitem{trof8}
V. I. Trofimov, Graphs with projective suborbits. Exceptional cases of characteristic 2. IV. Izv. Math., 67 (2003), 1267--1294.

\bibitem{trofweiss1}
V. I. Trofimov, R. M. Weiss, Graphs with a locally linear group of automorphisms. Math. Proc. Cambridge Philos. Soc. 118 (1995), no. 2, 191--206. 

\bibitem{trofweiss2}
V. I. Trofimov, R. M. Weiss, The group {$E_6(q)$} and graphs with a locally linear group of automorphisms. Math. Proc. Cambridge Philos. Soc. 148 (2010), no. 1, 1--32. 

\bibitem{Tutte1}
W. T. Tutte, A family of cubical graphs. Proc. Cambridge Philos. Soc. 43 (1947), 459--474.


\bibitem{Tutte2}
W.~T.~Tutte, On the symmetry of cubic graphs. Canad. J. Math. 11 (1959), 621--624.

\bibitem{verret}
G. Verret, On the order of arc-stabilisers in arc-transitive graphs. Bull. Aust. Math. Soc. 80 (2009), 498--505.

\bibitem{weisscon}
R. Weiss, s-transitive graphs. Colloq. Math. Soc. J\'anos Bolyai 25 (1978) 827--847.

\bibitem{weissgold}
R. Weiss, On a theorem of Goldschmidt. Ann. of Math. (2) 126 (1987), no. 2, 429--438. 

\end{thebibliography}
\end{document}